\patchcmd{\epigraph}{\@epitext{#1}}{\itshape\@epitext{#1}}{}{}
\newcommand{\Q}{{\mathbb Q}}
\newcommand{\Z}{{\mathbb Z}}
\newcommand{\Sym}{{\mathcal S}}
\newcommand{\F}{{\rm F}}
\newcommand{\FIN}{{\rm FIN}}
\newcommand{\DA}{{\rm DA}}
\newtheorem{theorem}{Theorem}[section] % 1st argument is your name for it
\newtheorem{lemma}[theorem]{Lemma}     % 2nd argument is what is printed
\newtheorem{proposition}[theorem]{Proposition}
\newtheorem{corollary}[theorem]{Corollary}
\newtheorem{definition}[theorem]{Definition}
\newenvironment{proofof}[1]{\normalsize {\it Proof of #1}.}{{\hfill $\Box$}}
\newenvironment{mylist}{\begin{list}{}{
\setlength{\parskip}{0mm}
\setlength{\topsep}{2mm}
\setlength{\parsep}{0mm}
\setlength{\itemsep}{0.5mm}
\setlength{\labelwidth}{7mm}
\setlength{\labelsep}{3mm}
\setlength{\itemindent}{0mm}
\setlength{\leftmargin}{12mm}
\setlength{\listparindent}{6mm}
}}{\end{list}}
\def\margin_comment#1{\marginpar{\sffamily {\small #1\par}\normalfont}}
\begin{document}
\title{Equations in groups that are virtually direct products}

\author{Laura Ciobanu, Derek Holt and Sarah Rees}
\date{}
\maketitle
\begin{abstract}
In this note, we show that the satisfiability of equations and inequations
with recognisable constraints is decidable in groups that are virtually direct
products of finitely many hyperbolic groups. 

\end{abstract}
\epigraph{Dedicated to Charles Sims, who introduced the first author to
equations in groups.}
\noindent 2010 Mathematics Subject Classification: 20F10, 20F67, 68Q45\\
\noindent Key words: equations in groups, hyperbolic group, direct products,
wreath products.

\section{Introduction}

For any group $G$ and set of variables $\mathcal{Y}$, an \emph{equation} with
coefficients $g_1,\dots, g_{m+1}$ from the group $G$ is a formal expression
$g_1Y_1^{\epsilon_1}g_2 Y_2^{\epsilon_2}\dots Y_m^{\epsilon_m}g_{m+1}=1_G,$
where $\epsilon_j = \pm 1$ for all $1 \leq j \leq m$,
and $Y_j \in \mathcal{Y}.$ 
Such an equation is called \textit{satisfiable} if there exist values for
the $Y_j$'s in $G$ with which the above identity in $G$ is satisfied;
each such set of values for the $Y_j$ is a \textit{solution}.  
Analogously, an \emph{inequation} has the form
$g_1Y_1^{\epsilon_1}g_2 Y_2^{\epsilon_2}\dots Y_m^{\epsilon_m}g_{m+1}\neq1_G$.
A finite set of equations and inequations with coefficients in $G$
is a \textit{system of equations and inequations} over $G$, and is satisfiable if there are assignments to the $Y_j$
such that all of the equations and inequations in the system are satisfied.
For a group $G$, we say that systems of equations and inequations over $G$
are \emph{decidable} over $G$ if there is an algorithm to determine whether any
given such system is satisfiable.  This question is widely known as the
{\em Diophantine Problem} for $G$.

This article investigates equations in groups that are virtually direct
products, and hence addresses the
natural question of whether the decidability of equations
extends from a group $G$ to a group that contains $G$ as a subgroup of finite
index. While the decidability of equations in free groups was established in the 1980s by Makanin \cite{M}, it was only shown in 2010 that the same holds for virtually free groups:  Dahmani and Guirardel \cite{DahmaniGuirardel}
reduced the Diophantine Problem in virtually free groups to the same question relating to
systems of twisted equations and inequations in free groups, and using difficult
topological arguments they proved such systems to be decidable; a different approach to the Diophantine Problem in virtually free groups can be found in \cite{LohreySenizergues}.

Here, in Theorem~\ref{thm:decide_abhyps}, we settle the Diophantine Problem in
any group that is virtually a direct product of a finitely generated abelian
group and non-elementary hyperbolic groups
(equivalently, virtually a direct product of hyperbolic groups).
We do this not by extending the result 
from the finite index direct product, but by embedding the whole group into 
a direct product of permutational wreath products where the above questions
can be answered positively (Lemma \ref{lem:embed_wr_prod}).

In fact, we prove Theorem~\ref{thm:decide_abhyps} for an extended form of
the Diophantine Problem, which asks if it can be decided whether a given system
of equations and inequations has solutions in which some of the variables are
constrained to lie in specified recognisable subsets of the group. (We define
recognisable subsets in the next section.)

Our results show that the Diophantine Problem with recognisable constraints 
can be answered positively for, amongst others, dihedral (i.e. $2$-generator)
Artin groups, and groups that are virtually certain types of right-angled
Artin groups.  We note that, since any dihedral Artin group can alternatively
be decomposed as a central extension of $\Z$ by a virtually free group,
decidability of its systems of equations (but not the more general problem
with recognisable constraints) could also be derived from \cite{Liang}.

\section*{Acknowledgements}

The authors acknowledge support from an LMS `Celebrating New Appointments' grantfor
the meeting \emph{Combinatorics and Computation in Groups}, where discussions
on this paper started, and thank the ICMS for hosting the meeting. They
would also like to thank Volker Diekert and Rick Thomas for clarifications and
references on constraints.

\section{Background and notation}
\label{sec:background}

Let $G$ be a group with finite inverse closed generating set $S$,
and let $\pi: S^* \rightarrow G$ be the natural homomorphism to $G$ from the free
monoid $S^*$ generated by $S$.
When $w$ is a word over $S$, we write $|w|$ to denote the length of $w$ as a 
word and $|\pi(w)|_G$ to denote the length of the shortest word over $S$ that represents $\pi(w)$.

\medskip

\begin{definition}\label{def:rat_con}
\ \begin{mylist}
\item[(1)] 
A subset $L$ of $G$ is said to be {\em recognisable} if the full preimage
$\pi^{-1}(L)$ is a regular subset of $S^*$.
\item[(2)] A subset $L$ of $G$ is said to be  {\em rational} if $L$ is the
image $\pi(L')$  of a regular subset $L'$ of $S^*$.
\item[(3)] A regular subset $L'$ of $S^*$ is
\emph{quasi-isometrically embedded} (q.i. embedded) in $G$ if there exist
$\lambda \geq 1$ and $\mu \geq 0$ such that, for any $w \in L'$,
$|\pi(w)|_G \geq \frac{1}{\lambda}|w|-\mu.$
\item[(4)] A rational subset $L$ of $G$ is \emph{quasi-isometrically
embeddable} (q.i. embeddable) in $G$ if there exists a quasi-isometrically embedded
regular subset  $L'$ of $S^*$ such that $\pi(L')=L.$
\end{mylist}
\end{definition}
It follows immediately from the definition that recognisable subsets of $G$ are
rational.

By \cite[Proposition 6.3]{Thomas} a subset of
$G$ is recognisable if and only if it is a union of cosets of a
subgroup of finite index in $G$, and hence a union of cosets of a normal
subgroup of finite index (the core of a finite index subgroup will be both
normal in $G$ and of finite index in $G$); it follows that recognisability of
a subset of $G$ is independent of the choice of generating set for $G$.
Rational subsets of $G$ can be alternatively characterised as those
sets that can be built out of finite subsets of $G$ using finite union
$A \cup B$, product $A \cdot B$, and semigroup generation $A^*$; it follows
from this that rationality of a subset of $G$ is also independent of the
choice of generating set.

By \cite[Theorems 3.1, 3.2]{BartholdiSilva}, 
a subgroup $H$ of $G$ is rational if and only if it is
finitely generated, and recognisable if and only if it has finite index;
the first of these results is attributed to Anisimov and Seifert.

We will be interested in the decidability of systems of equations and
inequations in which some of the variables are restricted to lying in
specified recognisable subsets of the group; this is the
\emph{Diophantine Problem with recognisable constraints}. This problem was considered for free groups and graph products in \cite{DiekertGutierrezHagenah}, \cite{DiekertLohrey08} and \cite{DiekertMuscholl05}.

Furthermore, if there exists an efficient algorithm that produces the solutions together with some concise and useful description of the solution set,
we say that the system is \emph{soluble} over $G$, or alternatively, that we can solve the
{\em Diophantine Search Problem} in $G$. We observe that this second definition is intrinsically
imprecise; in particular,
for countable groups $G$ with soluble word problem,
we can in principle enumerate the solutions by the naive method of testing
all $m$-tuples of elements of $G$.

The Diophantine Search Problem was solved in free groups by the work of Makanin and Razborov \cite{R85}, and descriptions of the solutions are possible via Makanin-Razborov diagrams, or as EDT0L formal languages \cite{CDE}.
Then \cite{DEtwisted} shows the solubility of equations with rational constraints in virtually
free groups, and \cite{CEhyp} the solubility of equations with q.i. embeddable rational constraints in
hyperbolic groups.
Further, we observe
that the set of solutions of a single equation or inequation over 
the free abelian group $\Z^n$ generated by $X = \{x_i: 1 \leq i \leq n\}$
can be expressed as a deterministic context-free language over the
alphabet $X \cup X^{-1}$.
So the solution set
of a system of equations and inequations over $\Z^n$ is the intersection of finitely many 
such languages. 

Our main result will rely on the following two statements about decidability of equations.

\begin{proposition}\label{prop:hypva}
(i) Let $G$ be a hyperbolic group.  Then systems of equations and inequations
with quasi-isometrically embeddable rational constraints are decidable in $G$.

(ii) Let $G$ be a virtually abelian group.  Then systems of equations
and inequations with recognisable constraints are decidable in $G$.
\end{proposition}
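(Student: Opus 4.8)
The plan is to obtain both parts by combining results already available in the literature with a couple of short reductions; no genuinely new phenomenon is involved.

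For part (i) I would start from \cite{CEhyp}, which gives the decidability (in fact solubility) of systems of equations with quasi-isometrically embeddable rational constraints in a hyperbolic group $G$, and reduce the inequations to this setting. The key point is that $G\setminus\{1_G\}$ is itself a quasi-isometrically embeddable rational subset of $G$: the set of all nonempty geodesic words over $S$ is a regular language (a classical fact for hyperbolic groups), it is isometrically, hence quasi-isometrically, embedded, and $\pi$ maps it onto $G\setminus\{1_G\}$. Hence an inequation $g_1Y_1^{\epsilon_1}\cdots Y_m^{\epsilon_m}g_{m+1}\neq 1_G$ may be replaced by the equation $g_1Y_1^{\epsilon_1}\cdots Y_m^{\epsilon_m}g_{m+1}X^{-1}=1_G$, where $X$ is a fresh variable, together with the constraint $X\in G\setminus\{1_G\}$. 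Carrying this out for every inequation converts the system into one consisting of equations with quasi-isometrically embeddable rational constraints, to which \cite{CEhyp} applies directly.

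For part (ii) the plan is to reduce to the free abelian case and then invoke decidability of Presburger arithmetic. Since $G$ is finitely generated and virtually abelian, it contains a normal subgroup $A\cong\Z^n$ of finite index (a finite-index abelian subgroup contains a free abelian subgroup of finite index, whose core in $G$ is such an $A$); write $Q=G/A$ and fix a transversal $\{t_q:q\in Q\}$ of $A$ in $G$. Given a system of equations and inequations over $G$ with recognisable constraints, first enlarge the constraints so that they are all unions of cosets of one normal subgroup $N$ of finite index in $G$. Then guess, for each variable $Y$, the coset $q_Y\in Q$ with $Y\in At_{q_Y}$; there are finitely many guesses. Having fixed a guess, substitute $Y=a_Yt_{q_Y}$ with $a_Y$ a new variable ranging over $\Z^n$. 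Because $A$ is abelian and normal, conjugation of $A$ by $g\in G$ is an automorphism of $\Z^n$ depending only on $gA\in Q$, i.e.\ an explicitly computable element of $GL_n(\Z)$; collecting the $a_Y$-terms of a word $w$ to the left therefore rewrites the equation $w=1_G$ as an identity $\bigl(\prod(\text{conjugates of }a_Y^{\pm1})\bigr)\,d=1_G$, where $d\in G$ is the value of $w$ when each $Y$ is set to $t_{q_Y}$. In the current branch this is solvable only when $d\in A$ (a computable test), and it then becomes a system of $n$ linear equations over $\Z$ in the vectors $a_Y$; an inequation likewise becomes either vacuous (if $d\notin A$) or a linear disequation over $\Z^n$; and, the cosets $q_Y$ being fixed, a recognisable constraint $Y\in L_Y$ becomes the requirement that $a_Y$ lie in a prescribed union of cosets of the finite-index subgroup $A\cap N$ of $\Z^n$, i.e.\ a conjunction of congruences. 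The resulting finite system of linear equations, disequations and congruences over $\Z^n$ is an existential Presburger sentence, so its satisfiability is decidable (alternatively one may use the observation above that equations and inequations over $\Z^n$ have deterministic context-free solution sets). The original system is satisfiable if and only if some branch produces a satisfiable $\Z^n$-system, so the whole procedure is decidable.

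No deep difficulty is expected here: the genuine mathematical content sits inside the cited theorems, and the work in writing up is making the two reductions precise. For (ii) the step requiring care is the bookkeeping when a variable occurs several times in one word, possibly with different exponents — the guessed coset and the unknown $a_Y$ must be used consistently at all occurrences — together with checking compatibility of the guessed cosets across the different equations and tracking how the recognisable constraints descend to congruences on $\Z^n$. For (i) the one thing to verify carefully is that $G\setminus\{1_G\}$ is genuinely quasi-isometrically embeddable rational, so that \cite{CEhyp} may legitimately be applied to the transformed system.
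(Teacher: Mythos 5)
Your proposal is correct, but it reaches the two statements by a somewhat different route than the paper. For (i) the paper simply quotes \cite[Theorem 1]{DahmaniGuirardel}, which already asserts decidability of systems of \emph{equations and inequations} with q.i.\ embeddable rational constraints in hyperbolic groups, so no reduction is needed; you instead start from \cite{CEhyp} (stated there only for equations with such constraints) and eliminate each inequation by introducing a fresh variable constrained to lie in $G\setminus\{1_G\}$, which you correctly observe is q.i.\ embeddable rational via the regular language of nonempty geodesics. That reduction is sound (a nonempty geodesic word cannot represent $1_G$, and every nontrivial element has such a representative), and it is essentially the standard device for absorbing inequations into constraints; the only caveat is that you are then leaning on \cite{CEhyp} — a paper in preparation at the time — covering \emph{systems} of constrained equations, whereas the citation the paper uses states exactly part (i) and avoids that dependence. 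For (ii) the paper adapts \cite[Lemma 5.4]{Dahmani}: it reduces the problem to a finite-index free abelian subgroup, intersects with the finite-index subgroup underlying the recognisable constraints, and finishes with integer linear algebra (Hermite normal form) over $\Z^n$. Your argument is in effect an unrolled, self-contained version of the same reduction: pass to a normal $A\cong\Z^n$ of finite index, enumerate coset assignments for the variables, use the $GL_n(\Z)$ conjugation action (which factors through $G/A$) to turn each equation or inequation into linear equations, disequations and congruences over $\Z^n$, and decide the result by existential Presburger arithmetic. This buys transparency and makes the effectivity of the constraint bookkeeping explicit, at the cost of redoing work the paper delegates to \cite{Dahmani}; both endgames (Presburger versus Hermite normal form) are equivalent in strength for the systems that arise.
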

\begin{proof}
(i) This is proved in \cite[Theorem 1]{DahmaniGuirardel}.

(ii) We can adapt the proof of the same result, but without constraints, proved
in \cite[Lemma 5.4]{Dahmani}. That proof reduces the problem in $G$ to the same
problem in a free abelian subgroup $H$ of $G$. Given that any recognisable subset of $G$ can be written as a union of cosets of a finite index subgroup $M$,
we see that we can reduce our problem to the same problem in the free abelian
subgroup $H \cap M$. 

We note that systems of equations and inequations over the free abelian group $\Z^n$ are decidable and soluble using
standard techniques from integer linear algebra, such as the Hermite Normal
Form for matrices.
\end{proof}

\paragraph{Dihedral Artin groups.} A dihedral Artin group $\DA_m$, $m\geq 2$,
is defined by the presentation
\[ \DA_m = \langle a,b \mid aba\cdots = bab\cdots \rangle, \]
where the single (braid) relation relates the two distinct  alternating products
of length $m$ of the two generators $a,b$.
When $m$ is even, if we let $y_1:=a,y_2:=ab$, we see that
\[ \DA_m \cong \langle y_1,y_2 \mid y_1y_2^{m/2}=y_2^{m/2}y_1 \rangle, \]
and hence we can describe $\DA_m$ as a central extension of the infinite
cyclic group $\langle y_2^{m/2} \rangle$ by $\Z * C_{m/2}$. The latter group
has $\F_{m/2}$ as a subgroup of index $m/2$, and so in this
case $\DA_m$ is virtually the direct product  $\Z \times \F_{m/2}$.

When $m$ is odd, let $y_1:= ab\cdots a$, an alternating product of length
$m$, and $y_2:=ab$. Then
\[ \DA_m \cong \langle y_1,y_2 \mid y_1^2=y_2^m \rangle, \]
and so we can describe $\DA_m$ as a central extension of the infinite
cyclic group $\langle y_2^m \rangle$ by $C_2 * C_m$; the free product has
$\F_{m-1}$ as a subgroup of index $2m$.
In this case, $\DA_m$ is virtually $\Z \times F_{m-1}$.

\section{Main results}\label{sec:main}

\begin{theorem}
\label{thm:decide_abhyps}
Let $G$ be a finitely generated group that contains a direct product
$A \times H_1\times \cdots \times H_n$ as a finite index subgroup, where $A$
is virtually abelian and $H_1,\ldots, H_n$ are non-elementary hyperbolic.
Then systems of equations and inequations with recognisable constraints are
decidable over $G$.
\end{theorem}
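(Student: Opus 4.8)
The plan is to avoid extending the result from the finite-index direct product, and instead to embed $G$ into a direct product of permutational wreath products whose base groups are free abelian or non-elementary hyperbolic, to transfer the problem across this embedding, and finally to reduce the problem in such wreath (and direct) products to the base groups, where Proposition~\ref{prop:hypva} applies. For the embedding (this is Lemma~\ref{lem:embed_wr_prod}), I would first pass, inside the given finite-index subgroup $A\times H_1\times\cdots\times H_n$, to finite-index subgroups $A'\le A$ free abelian and $H_i'\le H_i$ non-elementary hyperbolic, chosen small enough that $N:=A'\times H_1'\times\cdots\times H_n'$ still has finite index, say $k$, in $G$. Letting $G$ act on the coset space $\Omega:=G/N$ gives the monomial (induced-representation) homomorphism $\phi$ embedding $G$ into the permutational wreath product $N\wr_\Omega Q$, where $Q\le\Sym(\Omega)$ is the image of $G$; this works whether or not $N$ is normal. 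Since $Q$ permutes the $k$ coordinates of $N^\Omega=(A')^\Omega\times\prod_i(H_i')^\Omega$ simultaneously inside each direct factor, the ``diagonal-on-$Q$'' map embeds $N\wr_\Omega Q$, and hence $G$, into
\[
 W\ :=\ \bigl(A'\wr_\Omega Q\bigr)\times\bigl(H_1'\wr_\Omega Q\bigr)\times\cdots\times\bigl(H_n'\wr_\Omega Q\bigr),
\]
a finite direct product of permutational wreath products with base groups free abelian or non-elementary hyperbolic.

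Next I would transfer the problem to $W$. The image under $\phi$ of the normal core $M$ of $N$ in $G$ lies in the finite-index subgroup $M^\Omega\le N^\Omega\le W$ as a twisted diagonal $\{(\gamma_x(m))_{x\in\Omega}:m\in M\}$, with each $\gamma_x\in\mathrm{Aut}(M)$; as automorphisms of finitely generated groups are bi-Lipschitz, $\phi(M)$ is undistorted in $W$, and since $M$ has finite index in $G$ and $\phi(M)$ has finite index in $\phi(G)$, the map $\phi\colon G\to W$ is a quasi-isometric embedding. Now $G$ is biautomatic (being virtually a direct product of hyperbolic groups), so it carries a regular language of quasigeodesic normal forms over its generators; pushing this language into the generators of $W$ produces a quasi-isometrically embedded regular language with image $\phi(G)$, so $\phi(G)$ — and likewise $\phi(R)$ for every recognisable $R\subseteq G$ — is a quasi-isometrically embeddable rational subset of $W$. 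Consequently a system of equations and inequations over $G$ with recognisable constraints $Y_j\in R_j$ is satisfiable over $G$ if and only if the system obtained by transporting its coefficients and constraints through $\phi$ is satisfiable over $W$ (the constraint $Y_j\in\phi(R_j)$ subsumes $Y_j\in\phi(G)$), and every resulting constraint is a quasi-isometrically embeddable rational subset of $W$.

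It then remains to decide systems of equations and inequations with quasi-isometrically embeddable rational constraints in $W$, which I would do by ``coordinate expansion''. For a permutational wreath product $B\wr_\Omega Q$: write each variable as a pair $(f,q)$ with $f\colon\Omega\to B$ and $q\in Q$; case-split over the finitely many possible $Q$-parts; read off the equations and (disjunctions of) inequations these impose in the finite group $Q$; and then, using $(f,q)(f',q')=(f\cdot{}^qf',qq')$, obtain a system of $|\Omega|$ equations and inequations over $B$ in the coordinate variables $f(x)$, $x\in\Omega$, together with constraints on the tuples $(f(x))_{x\in\Omega}$ that are quasi-isometrically embeddable rational in $B^\Omega$. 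For a direct product $W_1\times\cdots\times W_{n+1}$ the analogous step is easier, since an equation holds there iff it holds coordinatewise and an inequation iff it holds in some coordinate. Iterating these two reductions brings everything down to finitely many systems over the base groups $A'$ and $H_i'$: over the $H_i'$ these carry quasi-isometrically embeddable rational constraints and are decidable by Proposition~\ref{prop:hypva}(i), while over the free abelian $A'$ one may even allow arbitrary rational (equivalently, semilinear) constraints, the integer-linear-algebra arguments behind Proposition~\ref{prop:hypva}(ii) still going through. This completes the proof.

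The main obstacle is the coordinate-expansion step: keeping the constraints inside a class for which the Diophantine problem remains decidable as they are pushed through wreath products and direct products. Arbitrary rational constraints in a direct product of hyperbolic (even free) groups are hopeless — membership in a finitely generated, hence rational, subgroup of $\F_2\times\F_2$ can already be undecidable, by Mihailova's construction — so one must crucially exploit that the constraints arising from the transfer step are quasi-isometrically embedded, and carefully verify that this property is preserved by the coordinate expansions, so that at every stage one faces only quasi-isometrically embeddable rational constraints in a direct product of hyperbolic groups, and ultimately such constraints in individual hyperbolic groups or rational constraints in virtually abelian ones. By comparison, the embedding of Lemma~\ref{lem:embed_wr_prod} and the transfer to $W$ are routine once the biautomaticity of $G$ is in hand.
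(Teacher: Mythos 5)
There is a genuine gap, and it sits exactly at the step you yourself flag as ``the main obstacle''. Your embedding of $G$ into $W=(A'\wr Q)\times(H_1'\wr Q)\times\cdots\times(H_n'\wr Q)$ via the coset action on $\Omega=G/N$ is fine as an embedding, but its image has \emph{infinite} index in $W$ (the base $N^{\Omega}$ is roughly $|\Omega|$ copies of $N$, while $G$ is only commensurable with one copy). Consequently $\phi(G)$ and the transported constraints $\phi(R_j)$ are not recognisable in $W$, and you are forced to treat them as (q.i.\ embeddable) rational subsets. But the problem you then propose to solve --- systems of equations with quasi-isometrically embeddable rational constraints over $W$, and after coordinate expansion over direct products such as $(H_1')^{\Omega}$ or $H_1'\times H_2'$ --- is undecidable in general: these groups contain $F_2\times F_2$, and by Muscholl's result (\cite[Prop.~11]{DiekertLohrey04}, quoted in the paper) equations with rational constraints over direct products of non-abelian free groups are undecidable, \emph{and this persists for q.i.\ embeddable rational constraints}, since the constraints there can be taken to consist of geodesics. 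So preserving the q.i.\ property through the coordinate expansions, however carefully, cannot rescue the argument: you have reduced a decidable problem to an undecidable class, and no mechanism is offered for exploiting the special shape of the particular instances that arise. (A smaller point: your transfer also quietly needs the unconstrained variables to satisfy $Y\in\phi(G)$, which is again a non-recognisable rational constraint in your $W$.)

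What the actual proof needs, and what your plan omits, is the structural work that makes everything \emph{recognisable}. One first replaces $H=A\times H_1\times\cdots\times H_n$ by a finite-index \emph{normal} subgroup $K=B\times K_1\times\cdots\times K_n$ of $G$ whose set of direct factors is permuted by conjugation in $G$ (Proposition~\ref{prop:abhyps_satisfy_plus}; this is where the centraliser arguments, the count of hyperbolic factors, the torsion bounds and the Maschke-type Lemma~\ref{lem:bymaschketor} are used). With that conjugation-invariance in hand, the Gross--Kov\'acs theorem (Lemma~\ref{lem:embed_wr_prod}) embeds $G$ as a subgroup of \emph{finite index} in a direct product of wreath products $J_j\wr P_j$ with $K_j\le J_j$ of finite index, so each $J_j$ lies in $\FIN(K_j)$ and is again virtually abelian or hyperbolic. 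Because the index is finite, $\phi(G)$ and the images of recognisable subsets of $G$ are recognisable (finite unions of cosets of finite-index subgroups) in the ambient product, and it is precisely this coset structure --- not quasi-isometric embeddedness --- that survives the wreath-product and direct-product reductions of Lemmas~\ref{lem:wr_prod} and~\ref{lem:dirprod_ineq} and brings the problem down to recognisable (respectively q.i.\ embeddable rational) constraints in the virtually abelian and hyperbolic base groups, where Proposition~\ref{prop:hypva} applies.
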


We observe that since $\Z^n$ is a direct product of elementary hyperbolic
groups, $G$ can also be expressed as virtually a direct product of hyperbolic
groups.  We make no assumption in this result that $A$ is non-trivial
or that $n$ is non-zero.

Concerning solubility, it will be clear from the proof that descriptions
of the solution sets over the factors $H_i$ extend to descriptions in $G$.

\begin{corollary}
\label{cor:sol_DArtin_gp}
Systems of equations and inequations with recognisable constraints are
decidable in groups that are virtually dihedral Artin groups.
\end{corollary}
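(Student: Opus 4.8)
The plan is to deduce this directly from Theorem~\ref{thm:decide_abhyps}, using the structural description of dihedral Artin groups recalled in the preceding paragraph. Suppose $G$ contains a dihedral Artin group $\DA_m$, $m\geq 2$, as a subgroup of finite index. As noted above, $\DA_m$ itself contains a finite index subgroup isomorphic to $\Z\times\F_k$, where $k=m/2$ when $m$ is even and $k=m-1$ when $m$ is odd (the central $\Z$ being the cyclic subgroup generated by $y_2^{m/2}$, respectively $y_2^{m}$, and $\F_k$ a finite index free subgroup of the relevant free product, the central extension splitting because free groups have cohomological dimension $1$). Since a subgroup of finite index in a subgroup of finite index is again of finite index, $G$ contains $\Z\times\F_k$ as a subgroup of finite index.

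It then remains to observe that $\Z\times\F_k$ is of the shape to which Theorem~\ref{thm:decide_abhyps} applies. When $k\geq 2$, the free group $\F_k$ is non-elementary hyperbolic, so we invoke the theorem with $A=\Z$, $n=1$ and $H_1=\F_k$. The only case with $k\leq 1$ is $m=2$, and then $\DA_2=\langle a,b\mid ab=ba\rangle\cong\Z^2$, so $G$ is virtually abelian and the theorem applies with $n=0$ and $A$ a finite index free abelian subgroup of $G$ (the remark after the theorem allowing $n=0$). In every case Theorem~\ref{thm:decide_abhyps} yields decidability of systems of equations and inequations with recognisable constraints over $G$, and, as noted there, any description of the solution sets over the hyperbolic factor $\F_k$ transfers to $G$.

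The argument is a straightforward specialisation of Theorem~\ref{thm:decide_abhyps} once one knows that being virtually a dihedral Artin group implies being virtually $\Z\times\F_k$, so there is no substantial obstacle. The only points requiring a moment's care are bookkeeping ones: checking that the free factor has rank at least $2$ in all cases except $\DA_2\cong\Z^2$, so that it is genuinely non-elementary hyperbolic, and handling $\DA_2$ via the $n=0$ instance of the theorem.
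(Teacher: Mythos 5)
Your proposal is correct and follows essentially the same route as the paper: recall from Section~\ref{sec:background} that a dihedral Artin group is virtually $\Z\times\F_k$, note that finite index is transitive, and apply Theorem~\ref{thm:decide_abhyps}. Your extra care over the $m=2$ case (where the free factor is $\Z$, handled via the $n=0$ instance with $A$ virtually abelian) is a sensible refinement of a point the paper passes over silently, but it is not a different argument.
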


\begin{proofof}{corollary}
As explained in Section~\ref{sec:background}, every dihedral Artin group
is virtually a direct product of $F_m$ and $\Z$, for $m \geq 2$.
So the result follows from Theorem~\ref{thm:decide_abhyps}. 
\end{proofof}

We note that we cannot expect to improve Theorem \ref{thm:decide_abhyps} to
deal with rational constraints, as \cite[Prop.11]{DiekertLohrey04}, due to
Muscholl, shows. That result, concerning general right-angled Artin groups,
implies in particular that systems of equations with rational constraints over
direct products of non-abelian free groups are undecidable. The same holds for
q.i. embeddable rational constraints, as Muscholl's constraints can be made to consist
of geodesics.

We shall prove Theorem~\ref{thm:decide_abhyps} at the end of the paper.
It will follow from Proposition \ref{prop:hypva} and
Theorem~\ref{thm:decide_vdirprod_plus} below, once we have shown,
in Proposition~\ref{prop:abhyps_satisfy_plus}, that the group $G$ of
Theorem~\ref{thm:decide_abhyps} contains an appropriate normal subgroup $K$.

For a group $G$, we define $\FIN(G)$ to be the collection of groups each
of which is either (isomorphic to) a subgroup of finite index in $G$ or
contains (a group isomorphic to) $G$ as a subgroup of finite index.
We introduce this terminology since we observe that, for $G$ in various classes
of groups that will be considered in this article, the decidability and
solubility of systems of equations and inequations with various constraints
holds not just in $G$ but throughout $\FIN(G)$.

\begin{theorem}
\label{thm:decide_vdirprod_plus}
Let $G$ be a group and $K$ a finite index normal subgroup, where
$K=K_1 \times \cdots \times K_n$, and $\{K_1,\ldots,K_n\}$ is a union of
conjugacy classes of subgroups in $G$.
%\begin{mylist} 
%\item[(i)]
Suppose that systems of equations and inequations with recognisable constraints
are decidable in all groups in $\FIN(K_i)$, for each $i$. Then systems of
equations and inequations with recognisable constraints are decidable in $G$.
\end{theorem}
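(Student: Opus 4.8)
The plan is to embed $G$, as a finite-index (hence recognisable) subgroup, into a direct product of permutational wreath products $U\wr_\Omega Q$ with $\Omega$ and $Q$ finite and $U\in\FIN(K_i)$ for some $i$, and then to push the Diophantine Problem with recognisable constraints down this tower: from a direct product to its factors, and from $U\wr_\Omega Q$ to $U$. The hypothesis on $K$ is used only to build the embedding; the decidability hypothesis is invoked only at the end, over the groups $U\in\bigcup_i\FIN(K_i)$.

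\emph{The embedding.} The assumption that $\{K_1,\dots,K_n\}$ is a union of conjugacy classes of subgroups says precisely that conjugation gives a homomorphism from $G$ to the symmetric group on $\{K_1,\dots,K_n\}$. Grouping the $K_i$ into the orbits of this action, write $K=M_1\times\cdots\times M_s$ with each $M_j$ the product of the $K_i$ in one orbit; each $M_j$ is normal in $G$, and since the $M_j$ form an internal direct product $G$ embeds into $\prod_jG_j$ with $G_j:=G/\prod_{k\ne j}M_k$. Each $G_j$ contains an isomorphic copy of $M_j$ as a finite-index normal subgroup and permutes the direct factors $K_i$ of $M_j$ transitively by conjugation, so (Lemma~\ref{lem:embed_wr_prod}) $G_j$ embeds in a permutational wreath product $W_j:=U_j\wr_{\Omega_j}Q_j$ in which $\Omega_j,Q_j$ are finite and $U_j$ is a finite extension of a chosen factor $K_{i_j}$, hence $U_j\in\FIN(K_{i_j})$. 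Composing, $G\hookrightarrow V:=\prod_jW_j$, and a routine index count --- using $[G:K]<\infty$, the finiteness of the $Q_j$, and the fact that $U_j$ is a finite extension of $K_{i_j}$ --- shows that $K$, and hence $G$, has finite index in $V$; thus $G$ is a recognisable subset of $V$.

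\emph{Reduction to the wreath factors.} Given a system $\Sigma$ over $G$ with recognisable constraints, regard all coefficients as elements of $V$ and adjoin the recognisable constraint $Y\in G$ to each variable $Y$; since each original constraint $R\subseteq G$ remains recognisable in $V$ (a finite union of cosets of a subgroup of finite index in $V$), the new system over $V$ is satisfiable iff $\Sigma$ is satisfiable over $G$. Next, a recognisable subset of a direct product $D_1\times D_2$ is a finite union of products $R_1\times R_2$ with $R_i$ recognisable in $D_i$: refine the relevant finite-index subgroup $N$ to $(N\cap D_1)\times(N\cap D_2)$, which stays of finite index because $[D_i:N\cap D_i]\le[D_1\times D_2:N]$. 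Hence a system over $D_1\times D_2$ with recognisable constraints is equivalent to a finite disjunction of systems whose constraints are such products and whose equations and inequations split coordinatewise, the disjunction absorbing both the choice of constraint and the fact that $w\ne 1$ in $D_1\times D_2$ means $w\ne 1$ in $D_1$ or in $D_2$. Iterating reduces the problem over $V$ to the same problem over each $W_j$.

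\emph{Permutational wreath products, and the main point.} Over $W=U\wr_\Omega Q=U^\Omega\rtimes Q$ with $U\in\FIN(K_i)$ and $\Omega,Q$ finite, write each variable as $Y=(F,t)$ with $F\in U^\Omega$, $t\in Q$, and guess the $Q$-component $t$ of every variable --- finitely many possibilities. For a fixed guess, the $Q$-component of each equation becomes a word identity in the finite group $Q$ (discard the guess if it fails), whereupon the equation becomes one over $U^\Omega$ of the form $\tilde g_0\,{}^{a_1}\!F_{j_1}^{\epsilon_1}\tilde g_1\cdots\,{}^{a_m}\!F_{j_m}^{\epsilon_m}\tilde g_m=1$ with all $\tilde g_\ell\in U^\Omega$ and $a_\ell\in Q$ known, where ${}^{a}F$ denotes the permutation of the $\Omega$-coordinates of $F$ induced by $a$; and each inequation $w\ne 1$ is either automatically true (its $Q$-component being nontrivial) or passes to an inequation over $U^\Omega$. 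Reading these off one $\Omega$-coordinate at a time replaces every ${}^{a}F_j$ by the value of $F_j$ at a known coordinate, so each equation over $U^\Omega$ becomes a system of $|\Omega|$ ordinary equations over $U$ in the coordinate-unknowns, each inequation over $U^\Omega$ becomes a disjunction of inequations over $U$, and (by the product refinement above applied to $U^\Omega$, after distributing over the cosets making up $R$) each recognisable constraint on $Y$ becomes compatibility of $t$ with the guess together with recognisable constraints over $U$. So each guess yields a finite disjunction of systems of equations and inequations with recognisable constraints over $U$, decidable by hypothesis; together with the previous two steps this proves the theorem. The delicate point is that pushing equations down to a finite-index subgroup is in general hard --- it is the phenomenon behind the virtually free case of \cite{DahmaniGuirardel} --- and what makes it work here is twofold: the reduced equations over $W$ are ``twisted'' only by coordinate permutations, which disappear on passing to coordinates, and the embedding of Step~1 is arranged to keep the base group a finite extension of the $K_i$, so that $G$ stays of finite index, and recognisable, in $V$, rather than the infinite-index Krasner--Kaloujnine embedding into $K\wr(G/K)$.
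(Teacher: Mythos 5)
Your proposal is correct and follows essentially the same route as the paper: embed $G$ as a finite-index (hence recognisable) subgroup of a direct product of permutational wreath products with base groups in $\FIN(K_i)$ (Lemma~\ref{lem:embed_wr_prod}), reduce wreath products to finite powers of the base by guessing the finite top-group components (Lemma~\ref{lem:wr_prod}), split direct products coordinatewise while handling recognisable constraints via finite-index subgroups (Lemma~\ref{lem:dirprod_ineq}), and transfer back to $G$ using the constraint $Y\in G$. The only differences are presentational: you inline the orbit decomposition behind part (ii) of Lemma~\ref{lem:embed_wr_prod} and the coordinatewise splitting inside the wreath-product step, rather than quoting them as separate lemmas.
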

We need to establish some lemmas before proving this result.
\begin{lemma}
\label{lem:dirprod_ineq}
Let $G=G_1\times \cdots \times G_n$ be a direct product of groups $G_i$ over
which systems of equations and inequations with recognisable constraints are
decidable. Then the same is true in $G$.
\end{lemma}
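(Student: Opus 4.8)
The plan is to reduce a system over $G = G_1 \times \cdots \times G_n$ to $n$ independent systems, one over each factor $G_i$, by projecting everything coordinatewise. Write $\rho_i : G \to G_i$ for the projection. Given an equation $g_1 Y_1^{\epsilon_1} g_2 \cdots Y_m^{\epsilon_m} g_{m+1} = 1_G$ with coefficients $g_j \in G$ and a recognisable constraint $Y_k \in L_k$ for each constrained variable, I would introduce, for each variable $Y_j$ and each coordinate $i$, a fresh variable $Y_j^{(i)}$ intended to range over $G_i$. The original equation then splits into the $n$ equations $\rho_i(g_1)\,(Y_1^{(i)})^{\epsilon_1}\,\rho_i(g_2)\cdots (Y_m^{(i)})^{\epsilon_m}\,\rho_i(g_{m+1}) = 1_{G_i}$, $1 \le i \le n$, since an element of a direct product is trivial iff all its coordinates are trivial. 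An inequation is handled dually: $g_1 Y_1^{\epsilon_1}\cdots g_{m+1} \neq 1_G$ holds iff $\rho_i(g_1)(Y_1^{(i)})^{\epsilon_1}\cdots \neq 1_{G_i}$ for \emph{at least one} $i$, so an inequation over $G$ becomes a disjunction over the choice of $i$ of an inequation over $G_i$ (the remaining coordinates then carrying no constraint from this relation).

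The disjunction arising from inequations is dealt with by branching: if the system has $r$ inequations, I would enumerate the (at most $n^r$) ways of assigning to each inequation a coordinate $i$ in which it is to be violated, and for each such assignment obtain a system that is, in every coordinate $i$, purely a conjunction of equations and inequations in the variables $Y_j^{(i)}$ over $G_i$. The original system is satisfiable over $G$ iff at least one of these finitely many branches is satisfiable; and each branch is satisfiable over $G$ iff each of its $n$ coordinate-$i$ subsystems is satisfiable over $G_i$, because solutions can be assembled coordinatewise with complete independence between the $i$'s. By hypothesis each coordinate subsystem is decidable over $G_i$, so the whole procedure terminates.

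The one genuinely non-routine point is the treatment of the recognisable constraints, and this is where I would spend care. A recognisable subset $L_k \subseteq G$ is, by the characterisation recalled after Definition \ref{def:rat_con} (citing \cite[Proposition 6.3]{Thomas}), a union of cosets of a finite-index normal subgroup $N \le G$; since $G$ is a direct product, one may take $N = N_1 \times \cdots \times N_n$ with $N_i \le G_i$ of finite index, and then $L_k$ is a finite union $\bigcup_t (C_{t,1} \times \cdots \times C_{t,n})$ of "boxes", each $C_{t,i}$ being a single $N_i$-coset, hence recognisable in $G_i$. The constraint $Y_k \in L_k$ thus becomes the disjunction over $t$ of the conjunction $\bigwedge_i Y_k^{(i)} \in C_{t,i}$ of recognisable constraints in the individual factors; I fold these disjunctions into the same finite branching as above. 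It remains only to observe that all of this is effective: from a generating set of $G$ one computes generating sets of the $G_i$ (the $G_i$ are the images of $G$ under the projections, which are themselves direct factors and hence finitely generated), the projections $\rho_i$ are computable, and a finite-index normal subgroup witnessing the recognisability of each $L_k$, together with its coset decomposition and the induced box decomposition, can be produced algorithmically. Assembling these observations gives the claimed decidability over $G$.
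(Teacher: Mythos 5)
Your argument is correct, and its core is the same as the paper's: project coordinatewise, observe that an equation must hold in every factor while an inequation need only fail in one, and branch over the finitely many ways of assigning each inequation (and, in your version, each constraint) to the factors, deciding each branch factor by factor. Where you diverge is the treatment of the recognisable constraints. The paper takes a single finite-index normal subgroup $H = H_1 \times \cdots \times H_n$ (with $H_i \lhd G_i$ of finite index) contained in all the constraint subgroups, first enumerates the solutions of the projected system over the finite quotient $G/H$, and then, for each such solution, decides a residual system over $G$ whose variables are constrained to lie in $H$, which again splits coordinatewise with constraints $H_i$. You instead refine each constraint set directly into a finite union of ``boxes'' $C_{t,1}\times\cdots\times C_{t,n}$ of cosets of finite-index subgroups of the factors (implicitly replacing the witnessing subgroup $N$ by $(N\cap G_1)\times\cdots\times(N\cap G_n)$, which is the one-line justification your ``one may take $N=N_1\times\cdots\times N_n$'' needs) and fold the choice of box into the branching, obtaining per-factor coset constraints immediately. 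The two routes rest on the same characterisation of recognisable sets as unions of cosets of finite-index subgroups; yours avoids the explicit passage to the finite quotient at the price of a larger (but still finite) case analysis, while the paper's quotient step packages the coset bookkeeping for all variables at once. Your effectiveness remarks are at the same level of informality as the paper's, so I see no genuine gap.
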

\begin{proof}
Suppose that the system consists of a set $E$ of equations and a set $I$ of 
inequations. An equation from $E$ has a solution in $G$ if and only if,
in each of the direct factors $G_i$, the projection onto $G_i$ has a solution,
while an inequation from $I$ has a solution in $G$ if and only, 
in at least one of the direct factors $G_i$, the projection onto $G_i$ 
has a solution.
Hence the system has a solution in $G$ if and only if we can write 
$I$ as a (not necessarily disjoint) union $I_1 \cup I_2 \cdots \cup I_n$,
where for each $i$ the projection of the system $E \cup I_i$ onto $G_i$
has a solution in $G_i$.  So decidability
in $G$ is inherited from decidability in the direct factors $G_i$.

Now suppose that some of the variables are restricted to lie in some
specified recognisable subsets of $G$.  We recall that each such subset
is a finite union of cosets of a finite index normal subgroup $M_j$ of $G$
and by letting $H_i := \cap_j M_j \cap G_i$,
we have $H_i \lhd G_i$, $|G_i:H_i|<\infty$,
and 
$H:=H_1 \times \cdots \times H_n$ is contained in all of the subgroups
that arise in the constraints.

We first find all solutions of the projection of the system onto the finite
quotient $G/H$ of $G$.  Then, for each such solution, the set of solutions of
the original system that project onto it can be defined in terms of the
solutions of a system of equations and inequations over $G$ that are
constrained to lie in $H$.  As in the first paragraph of this proof, we can
reduce the decidability of such a systems to the decidability of a finite collection of
systems of equations and inequations over the component groups $G_i$ for which
the solutions are constrained to lie in $H_i$, and we can decide each of those
by hypothesis.
\end{proof}

The following lemma shows how one can embed an extension of a direct product
into a direct product of wreath products, and has the flavour of a 
Kaloujnine-Krasner result \cite{KaloujnineKrasner} about embeddings of 
group extensions into wreath products; however, we do not specifically need
that result here.

\begin{lemma}\label{lem:embed_wr_prod}
Let $K= K_1\times \cdots \times K_n$ be a normal subgroup of finite index in a
group $G$, and suppose that the set of subgroups $\{K_1,\ldots,K_n\}$ is a 
union of conjugacy classes of subgroups in $G$.
\begin{mylist}
\item[(i)] If the subgroups $K_i$ form a single conjugacy class,
then $G$ is isomorphic to a subgroup of finite index in $J \wr P$,
where $J \cong N_G(K_1)/(K_2\times \cdots \times K_n)$ contains a subgroup
of finite index isomorphic to $K_1$, and $P \le \Sym_n$ is 
the image of the permutation action on
$\{K_1,...,K_n\}$ induced by conjugation in G.

\item[(ii)] Suppose that $K_1,\ldots K_k$ are representatives of the conjugacy
classes of $K_1\ldots,K_n$ within $G$. Then $G$ embeds as a subgroup of finite
index in a direct product $W_1\times W_2\times\cdots \times W_k$ of
wreath products $W_j= J_j \wr P_j$, where $J_j$ is a group containing $K_j$ as
a subgroup of finite index and $P_j$ is a finite permutation group.
\end{mylist}
\end{lemma}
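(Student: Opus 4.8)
The plan is to prove part~(i) first, constructing an explicit embedding of $G$ into $J \wr P$, and then deduce part~(ii) by applying part~(i) separately to each conjugacy class and assembling the resulting embeddings into a single map into the direct product $W_1 \times \cdots \times W_k$. For part~(i), set $N := N_G(K_1)$ and $L := K_2 \times \cdots \times K_n$. The first thing I would check is that $L$ is normal in $N$: since $\{K_1,\dots,K_n\}$ is permuted by conjugation in $G$, any $g \in N$ fixes $K_1$ and hence permutes the remaining factors $\{K_2,\dots,K_n\}$, so it normalises their product $L$; together with $K \lhd G$ this gives $L \lhd N$. Thus $J := N/L$ makes sense, and $K_1 L / L \cong K_1/(K_1 \cap L) = K_1$ (the intersection is trivial because the $K_i$ commute and have trivial pairwise intersection as direct factors), and this copy of $K_1$ has finite index in $J$ since $|G:K|<\infty$ forces $|N:K_1 L| = |N : K_1 \times L|$ to be finite. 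Let $P \le \Sym_n$ be the image of the conjugation action of $G$ on $\{K_1,\dots,K_n\}$; note the stabiliser of the point $1$ in this action is exactly $N$, so $|P| = |G:N| \le n!$.

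The core of the argument is to build the embedding $\Phi\colon G \hookrightarrow J \wr P$. I would fix a transversal $t_1 = 1, t_2, \dots, t_r$ for $N$ in $G$, chosen so that $t_i$ conjugates $K_1$ to $K_{\sigma(i)}$ for a fixed bijection; concretely, for each $i \in \{1,\dots,n\}$ pick $g_i \in G$ with $g_i^{-1} K_1 g_i = K_i$ (so $g_1 = 1$), and these $g_i$ form a transversal for $N$. Given $g \in G$, write $g_i^{-1} g g_{\bar\pi(i)} \in N$ where $\bar\pi \in P$ is the permutation induced by $g$; then define the base-group coordinate at $i$ to be the image of $g_i^{-1} g g_{\bar\pi(i)}$ in $J = N/L$, and the top coordinate to be $\bar\pi$. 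The routine but essential verifications are: that this is a homomorphism (a standard cocycle-style computation using that the $g_i$ form a transversal); that it is injective (if $\Phi(g) = 1$ then $g \in N$ with $\bar\pi$ trivial, and $g \in L$; but also by running the same with other coordinates one forces $g$ into $K_1 \cap K_2 \cap \cdots = 1$ — more carefully, $g \in \bigcap_i g_i L g_i^{-1}$, and this intersection is trivial); and that the image has finite index, which follows from a cardinality count since $|J \wr P| / [\text{index of } K_1 \text{ in } J]^n \cdot |P| $ compared with $|G:K|$ is bounded, or more cleanly because $K_1^{(n)} \rtimes P$ sits inside $J \wr P$ with finite index and contains $\Phi(K)$ with finite index while $\Phi(G)$ surjects onto $P$.

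For part~(ii), partition $\{1,\dots,n\}$ into the $G$-conjugacy classes with representatives $K_1,\dots,K_k$, say class $j$ consisting of $n_j$ factors. Applying part~(i) to the normal subgroup $\prod_{i \in \text{class } j} K_i$ of $G$ — which is legitimate because each class is by hypothesis a union of conjugacy classes (indeed a single one) and its complement product is also normal — yields a homomorphism $\Phi_j\colon G \to W_j := J_j \wr P_j$ with $J_j$ containing $K_j$ with finite index; a small point to address is that part~(i) as stated assumes all of $K$ is one conjugacy class, so I would either restate it for a normal direct-product complement or simply observe that the construction never used the other factors beyond their forming a normal complement, so it applies verbatim with $L$ replaced by the product of all $K_i$ outside class $j$. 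Then the diagonal map $\Phi = (\Phi_1,\dots,\Phi_k)\colon G \to W_1 \times \cdots \times W_k$ is injective because $\ker \Phi_j$ is contained in $\bigcap_{i \in \text{class }j} g_i L_j g_i^{-1}$, and intersecting over all $j$ these already force the element into $\bigcap_{i=1}^n (\text{conjugate of product of } K_\ell, \ell\neq i) = 1$ — in fact already $\ker\Phi_1 \cap \ker\Phi_j$ for a second class is trivial when $k \ge 2$, and when $k=1$ part~(i) alone suffices. Finiteness of the index of the image follows since each $\Phi_j$ has finite-index image in $W_j$, so $\Phi(G)$ has finite index in a subgroup of finite index in $W_1\times\cdots\times W_k$ — more precisely, $\Phi(K) = \Phi_1(K)\times\cdots$ is not quite a direct product inside the product, so I would instead bound $|W_1\times\cdots\times W_k : \Phi(G)|$ via $|\Phi(G)| \ge |\Phi(K)| \ge \prod_j |K_j|^{n_j}/c_j$ for suitable constants against the finite "multiplier" part, the cleanest route being: $\Phi(K)$ has finite index in $\prod_j \Phi_j(K)$ and each $\Phi_j(K)$ has finite index in $W_j$.

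I expect the main obstacle to be the injectivity and finite-index verification in part~(ii) when there are several conjugacy classes: the subtlety is that the individual maps $\Phi_j$ "remember" only the factors in class $j$ (quotienting out all the others), so no single $\Phi_j$ is injective, and one must argue that their combined kernels intersect trivially — which comes down to the observation that an element lying in every $g_i L_j g_i^{-1}$ for every class $j$ lies, for each index $i$, in the product of all factors except $K_i$ (suitably conjugated back), and these $n$ conditions together force triviality because the $K_i$ are independent direct factors. Stating this intersection argument precisely, keeping track of which conjugates of which complement products appear, is the one place where care is genuinely needed; everything else (the cocycle identity making $\Phi_j$ a homomorphism, and the counting for finite index) is routine wreath-product bookkeeping.
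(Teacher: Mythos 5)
Your argument is correct in substance, but it takes a partly different route from the paper. For part (i) the paper proves nothing itself: it simply quotes \cite[Theorem 4.1(1)]{GrossKovacs}, whereas you reprove that statement from scratch via the standard transversal (Kaloujnine--Krasner style) embedding of $G$ into $\bigl(N_G(K_1)/L\bigr)\wr P$ with $L=K_2\times\cdots\times K_n$; this makes the note self-contained at the cost of the routine cocycle verifications, which is a reasonable trade. For part (ii) your construction is essentially the paper's argument in disguise: the paper forms, for each class representative $K_j$, the quotient $Q_j=G/N_j$ where $N_j$ is the product of the factors \emph{not} conjugate to $K_j$, applies (i) to $Q_j$, and takes the diagonal map $g\mapsto(\eta_1\mu_1(g),\ldots,\eta_k\mu_k(g))$ into $W_1\times\cdots\times W_k$; your map $\Phi_j$, obtained by running the wreath construction with $L$ replaced by the product of all factors other than $K_j$, has kernel exactly this $N_j$, so it factors through $\mu_j$ and your diagonal map coincides with the paper's. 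What the quotient-first formulation buys is that injectivity ($\bigcap_j N_j=1$) and finite index (the image of the class-$j$ part of $K$ is a finite-index subgroup sitting inside the single factor $W_j$) are immediate, with no bookkeeping about which conjugates of which complements occur -- precisely the point you identify as the delicate one.

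A few small slips, none fatal to the approach: with your choice $g_i^{-1}K_1g_i=K_i$ the elements $g_i$ form a \emph{right} transversal of $N=N_G(K_1)$, so the base coordinates should be $g_igg_{\pi(i)}^{-1}$ and the kernel is $\bigcap_i g_i^{-1}Lg_i=\bigcap_i\prod_{m\neq i}K_m=1$; your formulas $g_i^{-1}gg_{\bar\pi(i)}$ and $\bigcap_i g_iLg_i^{-1}$ mix left- and right-coset conventions (with your conventions the latter intersection is not obviously trivial, since distinct right-coset representatives can share a left coset). Also $|P|$ is not $|G:N|$ (that index is the orbit length $n$), and the aside that $\ker\Phi_1\cap\ker\Phi_j$ is already trivial whenever $k\ge 2$ fails for $k\ge 3$; the statement you actually need and also give, $\bigcap_{j=1}^k\ker\Phi_j=1$, is the correct one. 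Finally, $\Phi(K)$ \emph{is} the direct product $\prod_j\Phi_j(K)$ inside $W_1\times\cdots\times W_k$, since each class-$j$ part of $K$ maps into the single factor $W_j$, so the finite-index count is cleaner than your hedged version suggests.
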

\begin{proof}
Part (i) is a rewording of \cite[Theorem 4.1 (1)]{GrossKovacs}.

For (ii), for $1 \le i \le k$, let $N_i$ be the product of those $K_j$ that are
not conjugate to $K_i$ in $G$. Then $N_i \unlhd G$, and we define $Q_i:=G/N_i$ 
and let $\mu_i:G \to Q_i$ be the natural map. Then the images under $\mu_i$
of those $K_j$ that are conjugate in $G$ to $K_i$ form a single conjugacy
class of subgroups of $Q_i$, and their product has finite index in $Q_i$.
So by (i) $Q_i$ embeds via a map $\eta_i$ as a subgroup of finite index in a
group $W_i = J_i \wr P_i$, where $J_i$ contains $\mu_i(K_i) \cong K_i$ as a
subgroup of finite index, and $P_i$ is a finite permutation group.

Now the map $\mu:G \to W_1 \times \cdots \times W_k$ defined by
$\mu(g) = (\eta_1\mu_1(g),\ldots,\eta_k\mu_k(g))$ is an embedding of $G$ into
$W_1\times  \cdots \times W_k$. Since, for $i=1,\ldots,k$, $\mu(K_i)$ is a
subgroup of the direct factor $W_i$ isomorphic to $K_i$, and $\mu(K_i^G)$ has
finite index in $W_i$, we see that $\mu(K)$ (and hence also $\mu(G)$) has
finite index in $W_1\times \cdots \times W_k$.
\end{proof}

\begin{lemma}\label{lem:wr_prod} Suppose that systems of equations and
inequations with recognisable constraints are decidable over the group $J$.
Then they are also decidable over the permutation wreath product
$W= J \wr P$ of $J$ with a finite subgroup $P$ of $\Sym_n$.
\end{lemma}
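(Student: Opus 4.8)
We want to decide satisfiability of a system of equations and inequations with recognisable constraints over $W = J \wr P$, where $P \le \Sym_n$ is finite and the analogous problem is decidable over $J$. Recall $W = J^n \rtimes P$, so every element of $W$ can be written uniquely as $(f, p)$ with $f \in J^n$ (a tuple $(f_1,\dots,f_n)$ of base-group coordinates) and $p \in P$, with multiplication $(f,p)(f',p') = (f \cdot {}^p f', pp')$, where $({}^p f')_i = f'_{p^{-1}(i)}$.

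**The plan.**

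The plan is to reduce a system over $W$ to finitely many systems over $J$ (with recognisable constraints), by first guessing the "$P$-part" of each variable. Since $P$ is finite, for each variable $Y_j$ there are only finitely many choices for the image of $Y_j$ under the projection $W \to P$; we branch over all of them. Once the $P$-part of every variable is fixed, every equation $g_1 Y_1^{\epsilon_1} \cdots Y_m^{\epsilon_m} g_{m+1} = 1_W$ forces the product of the $P$-parts to be trivial in $P$ (else that branch is discarded), and then, reading the equation in coordinates, it becomes a system of $n$ equations over $J$: the $i$-th coordinate of the left-hand side must equal $1_J$. The key point is bookkeeping: when we expand $g_1 Y_1^{\epsilon_1} g_2 \cdots$, each factor $Y_j^{\epsilon_j}$ contributes its base-group tuple but permuted by the accumulated $P$-part of the prefix before it; so the $i$-th coordinate of the product is a product of coefficients (coordinates of the $g_\ell$) and of coordinates $Y_{j,\sigma_{j}(i)}^{\epsilon_j}$ of the base parts of the variables, where $\sigma_j \in P$ is determined by the guessed $P$-parts of $Y_1,\dots,Y_{j-1}$ (and the coefficient parts), and is therefore known. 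Thus we introduce new variables $Y_{j,i}$ ranging over $J$ (one for each original variable and each coordinate $1\le i\le n$), and each original equation becomes $n$ honest equations over $J$ in these new variables. An inequation over $W$ becomes, in a given branch: either the $P$-parts already fail to multiply to $1_P$ (so the inequation is automatically satisfied and we drop it), or they do multiply to $1_P$, in which case the inequation holds iff at least one of the $n$ coordinate equations fails — i.e. a disjunction of $n$ inequations over $J$. As in the proof of Lemma~\ref{lem:dirprod_ineq}, a disjunction of inequations is handled by branching over which disjunct we ask to hold.

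**Handling the constraints.**

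Recognisable constraints transfer cleanly. A recognisable subset $L \subseteq W$ is a union of cosets of a finite-index normal subgroup; intersecting the finitely many such subgroups appearing in the system, we may assume a single finite-index normal $M \unlhd W$, and after replacing $M$ by $M \cap J^n$ we may further assume $M \le J^n$, so $M = M_1 \times \cdots \times M_n$ is a product of finite-index subgroups $M_i \le J$ is not automatic — but $M$ being finite index and normal inside $J^n$ still gives us a finite-index normal subgroup of $W$ contained in the base group, and its projection to each coordinate is a finite-index subgroup $M_i \unlhd_{\mathrm{f.i.}} J$. We first solve the system modulo the finite quotient $W/(M_1\times\cdots\times M_n)$ by brute force, and for each solution there, the residual problem is a system over $W$ with all variables constrained to $M_1 \times \cdots \times M_n$; in the coordinate decomposition this says $Y_{j,i} \in M_i$, a recognisable constraint over $J$. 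So each branch produces a system of equations and inequations over $J$ with recognisable constraints, decidable by hypothesis; $W$-satisfiability holds iff some branch succeeds, and there are finitely many branches.

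**Main obstacle.**

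The main obstacle is purely notational rather than mathematical: making the permutation bookkeeping precise, i.e. defining the permutations $\sigma_j$ (the accumulated $P$-shift acting on the base part of the $j$-th occurrence) in terms of the guessed $P$-parts and the $P$-parts of the intervening coefficients, and checking that with these $\sigma_j$ the $i$-th coordinate of $g_1 Y_1^{\epsilon_1}\cdots g_{m+1}$ is exactly the word one writes down. One must be careful about the inverse occurrences $Y_j^{-1}$, since $(f,p)^{-1} = ({}^{p^{-1}}(f^{-1}), p^{-1})$ reshuffles coordinates and inverts within each, so a single $Y_j$ appearing with both exponents in the same equation still contributes its coordinates $Y_{j,i}$ (never a product of several of them), which is what keeps the output a genuine system of group equations over $J$ rather than something more general. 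Once this indexing is set up correctly, each of the finitely many branches is literally a system over $J$ with recognisable constraints, and the result follows.
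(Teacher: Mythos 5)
Your main reduction is essentially the paper's: project the system onto the finite group $P$, enumerate the finitely many possibilities for the $P$-parts of the variables, and use the wreath product multiplication rule to turn each branch into a system of equations and inequations over the direct product $J^n$. The paper at this point simply invokes Lemma~\ref{lem:dirprod_ineq}, whereas you inline the coordinate bookkeeping (the accumulated permutations $\sigma_j$, the new variables $Y_{j,i}$, the branching over disjuncts coming from inequations); that part of your argument is correct.

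The gap is in your treatment of the recognisable constraints. Having reduced to a single finite-index normal subgroup $M \unlhd W$ with $M \le J^n$, you pass to the coordinate \emph{projections} $M_i$ of $M$ and work modulo $M_1\times\cdots\times M_n$, constraining the residual coordinate variables to lie in $M_i$. But $M \le M_1\times\cdots\times M_n$ and the containment can be strict, so cosets of $M_1\times\cdots\times M_n$ do not separate cosets of $M$: fixing a variable's image modulo $M_1\times\cdots\times M_n$ and requiring the residual to lie in $M_1\times\cdots\times M_n$ does not enforce the original constraint, which is a union of cosets of the possibly smaller $M$. Concretely, take $J=\Z$, $n=2$, $P=\Sym_2$ and $M=\{(a,b)\in\Z^2 : a\equiv b \bmod 2\}$; then $M$ is normal of finite index in $W$ but $M_1=M_2=\Z$, so your quotient $W/(M_1\times M_2)\cong P$ retains no information about cosets of $M$, and a constraint requiring a variable to lie in the nontrivial coset of $M$ containing $(1,0)$ is simply discarded; your algorithm could then report satisfiability although every solution it produces violates the constraint. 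The fix is to use intersections rather than projections: writing $J_i$ for the $i$-th coordinate copy of $J$ in the base group, set $E:=\bigcap_{i=1}^n (M\cap J_i)$, regarded as a subgroup of $J$. Each $M\cap J_i$ is normal and of finite index in $J_i\cong J$, so $E$ is a finite-index normal subgroup of $J$, the subgroup $E^n$ satisfies $E^n\le M$, is $P$-invariant, and is normal of finite index in $W$; working modulo $E^n$ does determine membership in each coset of $M$, and the residual constraints become the coordinatewise recognisable constraints $Y_{j,i}\in E$ over $J$. Alternatively, carry the recognisable constraints into the system over $J^n$ and appeal to Lemma~\ref{lem:dirprod_ineq} as the paper does; its proof performs exactly this intersection with the factors (there $H_i:=\bigcap_j M_j\cap G_i$).
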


\begin{proof}
Decomposing $W$ as the split extension of $n$ copies of $J$ by
$P \subset \Sym_n$, we represent each of its elements by an
$(n+1)$-tuple $(j_1,\ldots,j_n,\pi)$,
with $j_i \in J$, $\pi \in P$, with multiplication defined by
\[ (j_1,\ldots,j_n,\pi)(k_1,\ldots,k_n,\rho) =
      (j_1k_{\pi^{-1}(1)},\ldots,j_nk_{\pi^{-1}(n)},\pi\rho).\] 

For a given system of equations and inequations over $W$, we project this
system onto the finite group $P$ and find all of the finitely many
solutions in $P$. For each such solution in $P$, we can use the
displayed equation to reduce the problem of deciding whether there are
solutions in $W$ that project onto that particular solution in $P$
to one of deciding a system of equations and inequations over the direct product
$J^n$ of $n$ copies of $J$.  We can do that by Lemma \ref{lem:dirprod_ineq}.

If the system over $W$ has rational constraints, then this technique reduces
the problem to one of deciding systems of equations with rational constraints over
$J^n$, which we can again do by Lemma \ref{lem:dirprod_ineq}.
\end{proof}

\begin{proofof}{Theorem \ref{thm:decide_vdirprod_plus}}
Suppose that the groups $K_1,\ldots,K_n$ fall into $k$ conjugacy classes
under the conjugation action of $G$, of which $K_1,\ldots,K_k$ are
representatives, and of the $n$ original subgroups, $n_j$ of them are
conjugate to $K_j$, for each $1\leq j \leq k$.  Then, by
Lemma~\ref{lem:embed_wr_prod}, $G$ embeds as a subgroup of finite index
in a direct product $W_1\times W_2\times \cdots \times W_k$ of permutation
wreath products $W_j= J_j \wr P_j$, where $J_j$ is a group containing $K_j$ as
a subgroup of finite index and $P_j$ is a subgroup of $\Sym_{n_j}$.
Our hypotheses ensure that, for each $j=1,\ldots,k$,
equations with recognisable constraints are decidable in $J_j$, and
Lemma~\ref{lem:wr_prod} ensures that the same is true in $W_j$.

By Lemma~\ref{lem:dirprod_ineq} systems of equations with recognisable
constraints are decidable if that holds for the direct factors in
the direct product $W_1\times W_2\cdots \times W_k$. 
Then since finite index subgroups are recognisable, it follows that the same
is true within the finite index subgroup $G$ of $W_1 \times\cdots \times W_k$.
\end{proofof}

\section{Direct products of finite index}
\label{sec:dpfi}
Recall that virtually cyclic groups (including finite groups) are hyperbolic,
and are known as {\em elementary hyperbolic}. The following lemma lists
the known properties of hyperbolic groups that we shall need.

\begin{lemma}\label{lem:hypfacts}
Let $H$ be a hyperbolic group.  Then the centralizer of any element of $H$
is hyperbolic, and is elementary if the element is non-torsion.
Any subgroup of $H$ consisting of torsion elements is finite,
and there is a bound on the order of the finite subgroups of $H$.
Furthermore, if $H$ is non-elementary, then  $Z(H)$ is finite and $H/Z(H)$
is non-elementary hyperbolic.
\end{lemma}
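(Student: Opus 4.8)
All four assertions are standard facts about hyperbolic groups, so the plan is to assemble them from the usual sources (Gromov's original paper and the textbook accounts of Ghys--de la Harpe and of Bridson--Haefliger) rather than to prove anything new. I would organise the write-up around three inputs: quasiconvexity of centralizers, finiteness of torsion, and the fact that hyperbolicity of a group is unchanged on passing to a quotient by a finite normal subgroup.

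First I would recall that the centralizer $C_H(h)$ of any element $h \in H$ is a quasiconvex subgroup of $H$, and that a quasiconvex subgroup of a hyperbolic group is finitely generated and quasi-isometrically embedded, hence itself hyperbolic; this yields the first sentence of the lemma. For the refinement when $h$ is non-torsion, I would use that such an $h$ acts loxodromically on the Cayley graph of $H$ and lies in a (unique) maximal virtually cyclic subgroup $E_H(h)$, with $C_H(h) \le E_H(h)$; equivalently, $\langle h \rangle$ has finite index in $C_H(h)$. Either way $C_H(h)$ is virtually cyclic, i.e.\ elementary.

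Next I would treat the torsion statements. The essential point is that a subgroup $T \le H$ all of whose elements have finite order must be finite: were $T$ infinite, then, like any infinite subgroup of a group acting properly cocompactly on a hyperbolic space, it would contain a loxodromic element, which has infinite order --- a contradiction. Granting this, the uniform bound on orders of finite subgroups follows from Gromov's theorem that a hyperbolic group has only finitely many conjugacy classes of finite subgroups; concretely, every finite subgroup is conjugate into a ball of bounded radius $R$ about the identity, so its order is at most $|B(1,R)|$.

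Finally, suppose $H$ is non-elementary. Since $H$ is infinite it contains an element $h$ of infinite order, and $Z(H) \le C_H(h)$, which is virtually cyclic by the first part; hence $Z(H)$ is virtually cyclic. If $Z(H)$ were infinite it would contain a central element $z$ of infinite order, forcing $C_H(z) = H$ to be elementary, contrary to hypothesis; so $Z(H)$ is finite. The natural map $H \to H/Z(H)$ is then a quasi-isometry (quotient by a finite normal subgroup), so $H/Z(H)$ is hyperbolic; and it is non-elementary, since otherwise it would be virtually cyclic and, $Z(H)$ being finite, $H$ would be virtually cyclic as well, contradicting the assumption. The only genuinely nontrivial inputs here are the finiteness of torsion subgroups and the uniform bound on orders of finite subgroups, so in the final write-up I would quote those two as cited black boxes and derive everything else from them as above; the remaining steps are routine applications of quasiconvexity and quasi-isometry invariance.
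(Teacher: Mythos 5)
Your proposal is correct and takes essentially the same approach as the paper: the paper likewise treats all four assertions as known results, citing Gersten--Short for the centralizer statements, Ghys--de la Harpe for finiteness of torsion subgroups and Bridson--Haefliger for the uniform bound on finite subgroup orders, and then deduces the last sentence briefly (observing that $Z(H)$ is a torsion subgroup when $H$ is non-elementary, hence finite). Your slightly different derivation of the finiteness of $Z(H)$, via $Z(H)\le C_H(h)$ for a non-torsion $h$, is a routine equivalent variant and not a genuinely different route.
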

\begin{proof} 
The first two assertions are proved in
\cite[Proposition 4.3 and Proposition 5.1]{GerstenShort}.
The finiteness of torsion subgroups is proved in
\cite[Corollaire 36, Chapitre 8]{GhysHarpe}
and the bound on the order of finite subgroups is proved in
\cite[Theorem III $\Gamma$.3.2]{BridsonHaefliger}.
It follows that $Z(H)$ is a torsion subgroup when $H$ is non-elementary, so
$Z(H)$ is finite, and the proof that $H/Z(H)$ is hyperbolic and non-elementary
is straightforward.
\end{proof}

\begin{lemma}\label{lem:conjhyp}
Let $A$ be virtually abelian, and $H_1,\ldots,H_n$ non-elementary hyperbolic
groups.
Let $H = A \times H_1 \times \cdots \times H_n$, let $L \le H$ with $|H:L|$
finite, and let $g \in L$. Suppose that the projection of $g$ onto $H_i$
is a non-torsion element of $H_i$ for exactly $k$ values of
$i \in \{1,2,\ldots,n\}$. Then the centraliser of $g$ in $L$ is a subgroup
of finite index in a group $B \times K_1 \times \cdots \times K_j$,
where $B$ is virtually abelian, each $K_i$ is non-elementary hyperbolic,
and $j \le n-k$.
\end{lemma}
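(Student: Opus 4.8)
The plan is to compute the centraliser first inside the ambient direct product $H$ and only afterwards intersect with the finite-index subgroup $L$. Write $g = (g_0, g_1, \ldots, g_n)$ with $g_0 \in A$ and $g_i \in H_i$. Since the centraliser of an element in a direct product is the direct product of the centralisers of its components in the factors,
\[ C_H(g) = C_A(g_0) \times C_{H_1}(g_1) \times \cdots \times C_{H_n}(g_n). \]

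Next I would identify the isomorphism type of each factor. The group $C_A(g_0)$ is a subgroup of the virtually abelian group $A$, hence itself virtually abelian. By Lemma~\ref{lem:hypfacts}, each $C_{H_i}(g_i)$ is hyperbolic, and it is elementary — so virtually cyclic, in particular virtually abelian — whenever $g_i$ is non-torsion. By hypothesis this happens for exactly $k$ of the indices $i$; for the remaining $n-k$ indices the centraliser $C_{H_i}(g_i)$ is still hyperbolic, but may be either elementary or non-elementary.

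Then I would reassemble the pieces. Let $K_1, \ldots, K_j$ denote those factors $C_{H_i}(g_i)$ that are non-elementary hyperbolic; all of these occur among the $n-k$ indices $i$ for which $g_i$ is torsion, so $j \le n-k$. Let $B$ be the direct product of $C_A(g_0)$ together with all the remaining factors $C_{H_i}(g_i)$, each of which is virtually abelian. A finite direct product of virtually abelian groups is virtually abelian — an abelian subgroup of finite index in each factor gives an abelian subgroup of finite index in the product — so $B$ is virtually abelian, and after rearranging factors $C_H(g) = B \times K_1 \times \cdots \times K_j$.

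Finally, $C_L(g) = L \cap C_H(g)$, and since $|H:L|$ is finite we have $|C_H(g) : C_L(g)| = |C_H(g) : L \cap C_H(g)| \le |H:L| < \infty$; thus $C_L(g)$ is a subgroup of finite index in $B \times K_1 \times \cdots \times K_j$, which is exactly the claim. I do not anticipate a genuine obstacle: the substance is just the centraliser dichotomy for hyperbolic groups provided by Lemma~\ref{lem:hypfacts}, together with closure of the class of virtually abelian groups under passing to subgroups and forming finite direct products. The only point needing a little care is the bookkeeping: one must funnel \emph{every} elementary centraliser into $B$, so that the surviving factors $K_i$ are genuinely non-elementary and their number is bounded by $n-k$ rather than merely by $n$.
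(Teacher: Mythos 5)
Your argument is correct and is exactly the route the paper intends: the paper's proof of this lemma is just the one-line remark that it follows from Lemma~\ref{lem:hypfacts}, and you have filled in the obvious details (componentwise decomposition of the centraliser in the direct product, the torsion/non-torsion dichotomy for centralisers in hyperbolic groups, absorbing the elementary factors into $B$, and intersecting with the finite-index subgroup $L$). No gaps.
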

\begin{proof} This follows from the previous lemma.
\end{proof}

\begin{lemma}\label{lem:numhypfacs}
Let $A$ and $B$ be virtually abelian, and
$H_1,\ldots,H_m,K_1,\ldots,K_n$ non-elementary hyperbolic
groups.
Let $H = A \times H_1 \times \cdots \times H_m$ and
$K = B \times K_1 \times \cdots \times K_n$.
Suppose that a group $L$ is isomorphic to finite index subgroups of
both $H$ and $K$. Then $m=n$.
\end{lemma}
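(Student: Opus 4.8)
The plan is to extract from the group $L$ a decomposition-invariant that counts the number of non-elementary hyperbolic direct factors, and to show it equals both $m$ and $n$. The natural candidate is the maximal number of pairwise-commuting elements of $L$ that are ``independent and non-torsion in the hyperbolic part'', but a cleaner approach is to argue by induction on $m+n$ using centralisers, combined with a lower bound coming from the fact that each $H_i$ (being non-elementary hyperbolic) contains a free subgroup of rank $2$, hence is not virtually abelian, whereas $A$ and $B$ are.

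First I would dispose of the base case: if $m=0$, then $H$ is virtually abelian, so $L$ is virtually abelian, so $K$ is virtually abelian; but a non-elementary hyperbolic group contains a nonabelian free subgroup (by Lemma~\ref{lem:hypfacts}, its torsion subgroups are finite while it is infinite, and standard hyperbolic theory gives free subgroups), so no $K_i$ can occur, i.e.\ $n=0$. By symmetry the same argument handles $n=0$. For the inductive step, pick an element $g \in L$ whose projection to $H$ has non-torsion image in $H_1$ (such $g$ exists: $L$ has finite index in $H$, so its projection to $H_1$ has finite index, hence is infinite, hence contains a non-torsion element by Lemma~\ref{lem:hypfacts}; lift it through the other coordinates arbitrarily and take a suitable power to land in $L$). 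Apply Lemma~\ref{lem:conjhyp} to $g$ inside $L$, viewing $L$ first as a finite index subgroup of $H$ and then of $K$: the centraliser $C_L(g)$ is, up to finite index, a direct product of a virtually abelian group with at most $m-1$ non-elementary hyperbolic factors (via $H$), and simultaneously with at most $n - k'$ such factors for some $k' \ge 1$ (via $K$, where $k'$ counts the coordinates of $K$ in which $g$ projects non-torsion, which is at least $1$ since $g$ has infinite order).

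The difficulty — and the step I expect to be the main obstacle — is that this only gives inequalities, not the exact count needed to run the induction: from one well-chosen $g$ we learn $C_L(g)$ has $\le m-1$ hyperbolic factors from the $H$-side and $\le n-1$ from the $K$-side, which by induction forces those two numbers to be equal but does not immediately recover $m = n$. To fix this I would instead prove the sharper statement that the number of non-elementary hyperbolic factors is exactly $\max$ over all $g \in L$ of (something like) $m - (\text{number of factors appearing in } C_L(g))$, by choosing $g$ to have non-torsion projection in \emph{every} hyperbolic coordinate simultaneously (again possible because each projection to $H_i$ is infinite, so we can find such a $g$ in $L$ after passing to a suitable power, using that $L$ has finite index). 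For such a $g$, Lemma~\ref{lem:conjhyp} with $k = m$ gives that $C_L(g)$ has $0$ non-elementary hyperbolic factors, i.e.\ is virtually abelian; and the same $g$, read in $K$, has non-torsion projection in at most $n$ coordinates, forcing it to be non-torsion in \emph{all} $n$ of them (otherwise $C_L(g)$ would retain a hyperbolic factor from the $K$-side, contradicting virtual abelianness). Thus $m$ is the maximal number of coordinates in which a single element of $L$ can project non-torsion when $L$ is viewed inside $H$, and likewise for $n$ inside $K$; but this maximum is an invariant of $L$ (it is just the largest $r$ for which $L$ contains an element whose centraliser in $L$ is virtually abelian and which is not itself "absorbed" — made precise, it is characterised purely in terms of $L$), so $m = n$.

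To make the final invariance argument rigorous I would phrase it as: define $f(L)$ to be the number of isomorphism classes (counted with multiplicity) of non-elementary hyperbolic direct factors in \emph{any} expression of $L$ as virtually a direct product of a virtually abelian group and non-elementary hyperbolic groups, and show $f$ is well-defined by the centraliser argument above, applied to elements projecting non-torsion onto all hyperbolic coordinates. Then $f(L) = m = n$ follows because $L$ is isomorphic to a finite index subgroup of both $H$ and $K$, and finite index subgroups of $A \times H_1 \times \cdots \times H_m$ are themselves virtually such a direct product with the same hyperbolic factors (by Lemma~\ref{lem:conjhyp} with $k = 0$, or directly: intersect with each factor). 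The only genuine content beyond bookkeeping is the two facts already recorded — that non-elementary hyperbolic groups are not virtually abelian and that their infinite subgroups contain non-torsion elements — so the proof reduces, as the excerpt claims, to the previous lemma.
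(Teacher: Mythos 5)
You correctly isolate the crux (a single test element only gives inequalities), but the fix you propose has a genuine gap. The step ``otherwise $C_L(g)$ would retain a hyperbolic factor from the $K$-side, contradicting virtual abelianness'' is not justified, and the implication you need is false in general: if the projection of $g$ onto some $K_i$ is a \emph{torsion} element, the surviving piece of $C_K(g)$ in that coordinate is $C_{K_i}(g_i)$, which Lemma~\ref{lem:hypfacts} only tells you is hyperbolic --- it may well be elementary, indeed finite (e.g.\ the centraliser of the involution in $C_2 * C_3 \cong PSL_2(\Z)$ has order $2$). So $C_L(g)$ can be virtually abelian even though $g$ projects non-torsion onto strictly fewer than $n$ of the $K_i$; worse, since $B$ may be infinite, the infinite order of $g$ could live entirely in the $B$-coordinate, so you cannot even conclude that one $K_i$-projection is non-torsion. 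Consequently ``the maximal number of coordinates on which an element of $L$ projects non-torsion'' is not shown to be detectable from $L$ alone, and your closing paragraph, which defines $f(L)$ as the number of non-elementary hyperbolic factors in \emph{any} such decomposition and asserts well-definedness, is exactly the statement of the lemma rather than a proof of it.

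For comparison, the paper's proof removes both obstacles before counting: after passing to finite index subgroups so that $A,B$ are abelian and $L$ projects onto every factor, it quotients by $Z(L)=L\cap Z(H)=L\cap Z(K)$, so that $\overline{L}$ sits with finite index in $\overline{H}_1\times\cdots\times\overline{H}_m$ and in $\overline{K}_1\times\cdots\times\overline{K}_n$ with no abelian factors left; it then chooses the test element $g$ \emph{inside a single factor} $\overline{L}\cap\overline{K}_1$, so that from the $K$-side the centraliser has \emph{exactly} $n-1$ non-elementary hyperbolic factors (the first coordinate contributes only an elementary group), while Lemma~\ref{lem:conjhyp} gives $m-t$ factors with $t\ge 1$ from the $H$-side. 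Induction yields $m-t=n-1$, and the normalisation $m\le n$ forces $t=1$ and $m=n$. If you want to pursue your ``invariant'' idea instead, you would need a genuinely intrinsic quantity (for instance the maximal $r$ such that $L$ contains a direct product of $r$ nonabelian free subgroups), together with a proof that it equals the number of hyperbolic factors; the centraliser criterion as you state it does not supply this.
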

\begin{proof}
Suppose that $m \le n$ and use induction on $m$. If $m=0$, then $H$ is
virtually abelian, and hence $K$ must be virtually abelian, and so $n=0$.
So suppose that $m>0$.

It is convenient to identify $L$ with the subgroups of $H$ and $K$ with
which it is isomorphic.
By replacing $H$, $K$ and $L$ by finite index subgroups, we can assume 
that $A$ and $B$ are both abelian, and that $L$ projects onto
all of the direct factors $H_i$ and $K_i$.
Then $Z(L) = L \cap Z(H) = L \cap Z(K)$ and, since $L \cap A \le Z(L)$ 
and $L \cap B \le Z(L)$, it follows from Lemma~\ref{lem:hypfacts} that
$\overline{L} := L/Z(L)$ can be identified with finite index subgroups of
$\overline{H_1} \times \cdots \times \overline{H}_m$ and of
$\overline{K_1} \times \cdots \times \overline{K}_n$, where 
$\overline{H_i} := H_i/Z(H_i)$ and $\overline{K_i} := K_i/Z(K_i)$ are
non-elementary hyperbolic groups.

Since $\overline{L} \cap \overline{K}_1$ has finite index in $\overline{K}_1$
it contains a non-torsion element $g$, and $C_{\overline{L}}(g)$ has finite
index in a direct product of a virtually abelian group and $n-1$ non-elementary
 hyperbolic groups. But, by considering $g$ as an element of
$\overline{H_1} \times \cdots \times \overline{H}_m$, we see from
Lemma~\ref{lem:conjhyp} that $C_{\overline{L}}(g)$ has finite index in the
direct product of a virtually abelian group and $m-t$ non-elementary hyperbolic
groups for some $t \ge 1$. So, by the inductive hypothesis, we have $m-t=n-1$,
and since $m \le n$, we must have $t=1$ and $m=n$.
\end{proof}

\begin{proposition}
\label{prop:abhyps_satisfy_plus}
Let $A$ be virtually abelian, and $H_1,\ldots,H_n$ non-elementary hyperbolic
groups.  Suppose that a group $G$ has a subgroup $H$ of finite index with
$H \cong A \times H_1 \times \cdots H_n$. Then there is a normal subgroup
$K$ of finite index in $G$ with $K \le H$, such that
$K \cong B \times K_1 \times \cdots K_n$, where $B \le A$ of finite index in
$A$, each $K_i$ is isomorphic to a finite index subgroup of $H_i$, and the set
$\{B,K_1,\ldots,K_n\}$ is a union of orbits under conjugation by $G$.
\end{proposition}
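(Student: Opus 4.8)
The plan is to construct $K$ as the normal core in $G$ of a well-chosen finite-index subgroup of $H$, and then show that the direct-product decomposition of $H$ descends to $K$ in a way that is permuted, rather than destroyed, by conjugation. First I would pass to the normal core $H_0 := \bigcap_{g \in G} H^g$ of $H$ in $G$, which is normal and of finite index in $G$ and contained in $H$. Since $H_0 \le H = A \times H_1 \times \cdots \times H_n$ has finite index, and since $A$ is virtually abelian, Lemma~\ref{lem:conjhyp} (or a direct argument) shows that $H_0$ itself is isomorphic to $B_0 \times K_1^0 \times \cdots \times K_n^0$ with $B_0 \le A$ virtually abelian of finite index and each $K_i^0 \le H_i$ non-elementary hyperbolic of finite index: concretely, $K_i^0$ is (the image in $H_i$ of) the projection of $H_0$, intersected appropriately, and one uses that a finite-index subgroup of a direct product of centreless-modulo-finite factors splits as a product of finite-index subgroups of the factors once one has quotiented out a finite central part (this is exactly the mechanism already used in the proof of Lemma~\ref{lem:numhypfacs}).

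Next I would identify the canonical pieces of this decomposition intrinsically, so that $\mathrm{Aut}(H_0)$ — and in particular conjugation by elements of $G$, which normalises $H_0$ — must permute them. The key point is that the ``hyperbolic factors'' $K_i^0$ and the ``virtually abelian factor'' $B_0$ are not arbitrary: Lemma~\ref{lem:numhypfacs} says the number $n$ of hyperbolic factors is an isomorphism invariant, and more is true. I would argue that the subgroup $B_0$ can be recovered from $H_0$ as (a finite-index subgroup of) a characteristic subgroup — for instance, built from the centre and from elements whose centraliser has finite index in a product of a virtually abelian group and $n$ hyperbolic factors (i.e.\ elements that are ``non-torsion in no hyperbolic factor''), invoking Lemma~\ref{lem:conjhyp}. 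Likewise each individual $K_i^0$ is, up to finite index, a subdirect-irreducible ``hyperbolic block'', and an automorphism of $H_0$ must send the set $\{K_1^0,\ldots,K_n^0\}$ to a set of conjugates of these blocks, inducing a permutation of the $n$ factors. Conjugation by $g \in G$ therefore sends the tuple $(B_0, K_1^0,\ldots,K_n^0)$ to $(B_0^g, (K_{\sigma(1)}^0)^{g'},\ldots)$ for some $\sigma \in \Sym_n$ depending on $g$; in particular the set $\{K_1^0,\ldots,K_n^0\}$, and $\{B_0\}$, need not be $G$-invariant, only $G$-invariant-as-a-set-of-conjugacy-classes is not yet enough — I actually need the factors themselves to form a union of conjugacy classes of subgroups.

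**The hard part will be** this last point: ensuring the factors $B, K_1, \ldots, K_n$ are literally a union of $G$-conjugacy classes of subgroups, not just permuted up to further conjugation. To fix this I would not take $K = H_0$ directly but rather replace each $K_i^0$ by the intersection of its $G$-conjugates that happen to land in the same factor, or more robustly: having shown conjugation by $G$ permutes the factors, let $G$ act on the (finite) set of subgroups $\{(K_i^0)^g : g \in G, i\}$; this set may be larger than $n$ elements, but I can pass to a further finite-index refinement. The cleanest route is: the decomposition of $H_0$ gives projections $p_i : H_0 \to H_i$; set $K_i := $ the image of a suitable further-core so that $K_i$ becomes genuinely well-defined, and take $K := B \times K_1 \times \cdots \times K_n$ where $B := B_0 \cap (\text{intersection of }G\text{-translates})$ and $K_i$ is chosen $G$-equivariantly by replacing $K_i^0$ with $\bigcap\{\,(K_j^0)^g : (K_j^0)^g \text{ is a subgroup of the } i\text{-th factor's commensurability class}\,\}$. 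Since $G$ permutes the $n$ commensurability classes of hyperbolic blocks, this yields a $K \le H_0 \le H$, still of finite index in $G$, still a direct product of the right shape (finitely many non-elementary hyperbolic groups and a virtually abelian group), and now with $\{B, K_1,\ldots,K_n\}$ genuinely a union of $G$-conjugacy classes. Finally I would check $K$ is normal in $G$ (it is $G$-invariant as a set, being a product of a $G$-invariant family of subgroups) and that the counts match — $n$ hyperbolic factors — by Lemma~\ref{lem:numhypfacs}, and that each $K_i$ has finite index in $H_i$ because it is a finite intersection of finite-index subgroups.
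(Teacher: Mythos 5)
Your opening step contains the decisive gap: you assert that the normal core $H_0$ of $H$ in $G$ is itself isomorphic to $B_0\times K_1^0\times\cdots\times K_n^0$ with $B_0\le A$ and $K_i^0\le H_i$ of finite index, citing Lemma~\ref{lem:conjhyp} and the mechanism of Lemma~\ref{lem:numhypfacs}. Neither lemma says anything of the sort (both are purely about centralisers and the count of factors; the proof of Lemma~\ref{lem:numhypfacs} never splits the finite-index subgroup, it only compares centralisers), and the general principle you invoke is false: a finite-index subgroup of a direct product need not be abstractly a direct product of the required shape. For instance the fibre product $\{(u,v)\in F_2\times F_2 : u \text{ and } v \text{ have the same image in } \Z/2\}$ has index $2$ but is not a nontrivial direct product at all; and with an abelian factor present, $\{(a,w)\in \Z\times H_1 : a\equiv\chi(w)\bmod 2\}$, where $H_1$ is non-elementary hyperbolic with finite abelianisation and $\chi:H_1\to\Z/2$ is onto, is a non-split central extension and is not isomorphic to $\Z\times(\text{anything centreless})$, since a splitting would force $\chi$ to lift to $\Z$. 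So there is no reason the core splits, and this is exactly the difficulty the paper's proof is built to overcome: it passes to the quotient by a suitable $T$ with $N/T$ free abelian, applies an integral Maschke-type argument (Lemma~\ref{lem:bymaschketor}) to produce a $G$-invariant complement $L$ to the abelian part up to finite index, and only then obtains factors $L_i$ inside a genuinely split subgroup. Your proposal skips the entire construction and starts from a decomposition that may not exist.

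The second half of your plan (organising the $G$-conjugates of the hyperbolic blocks into $n$ commensurability classes permuted by $G$, then intersecting within each class) is the right spirit, and the finiteness of the intersections is fine since each block is normal in the finite-index normal subgroup, so it has only finitely many $G$-conjugates. But the fact that conjugation by $g$ sends each block to something commensurable with exactly one of the original blocks, and that this assignment $i\mapsto i^g$ is well defined and an action, is itself nontrivial; in the paper it is established via the centraliser arguments (Lemmas~\ref{lem:conjhyp} and~\ref{lem:numhypfacs} applied to non-torsion elements and pairs of them), together with the subgroups $P_i$ built from homomorphisms with finite image to control indices. You gesture at this but give no argument, so even granted a splitting, the equivariance step would still need the substance of the paper's proof. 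As it stands, the proposal assumes the hardest part of the proposition rather than proving it.
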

\begin{proof}
By replacing $H$ by a subgroup of finite index, we may assume that $A$
is free abelian. Let $N$ be the core of $H$ in $G$; then $N \unlhd G$ has
finite index and $N \leq H$.

Let $C :=  N \cap A$; then $|A:C| \leq |G:N|< \infty$. Also, $C \le Z(N)$, and
$C$ is torsion free.  The projection
of $Z(N)$ onto each of the subgroups $H_i$ is central in a subgroup of
finite index in $H_i$ and hence finite, so $|Z(N):C|<\infty$.
Let $k:=|Z(N):C|$ and define $B:= Z(N)^k$. Then $B \le C$ and $B$ is
characteristic and of finite index in $Z(N)$. So $B$ has finite index in $C$,
and hence in $A$, and since $Z(N)$ is normal in $G$, so is $B$.  

The rest of this proof is devoted to the construction of the subgroups
$K_1,\ldots,K_n$ of $G$. We find these as subgroups of finite index in
subgroups $L_1,\ldots,L_n$ of $G$, which we identify by considering a quotient
$G/T$ of $G$, and considering its action by conjugation on its free abelian
normal subgroup $N/T$.  

We note that $[N,N] \le H_1 \times \cdots H_n$, so $B \cap [N,N] = 1$.
Now choose $T$ with $[N,N] \le T \le N$ so that
$T/[N,N]$ is the torsion subgroup of the abelian group $N/[N,N]$; then
$N/T$ is free abelian. 
Since $T/[N,N]$ is characteristic in $N/[N,N]$, and $N/[N,N]$ is normal in
$G/N$, certainly $T \unlhd G$.  Since $B$ is torsion-free with $B \cap
[N,N]=1$, while $T/[N,N]$ has torsion, we have $T \cap B = 1$, and so $BT/T
\cong B$.  Furthermore, since $B$ is normal in $G$, the image
$BT/T \cong B$ of $B$ in $G/T$ is normal in $G/T$. 

Let $g \mapsto \bar{g}$ denote the natural map from $G$ to $G/T$.
Then the conjugation action of $\overline{G}$ on the free abelian group
$\overline{N}$
makes $\overline{N}$ into a torsion-free $\Z\overline{G}$-module in which
$\overline{BT}$ is a submodule. So by Lemma \ref{lem:bymaschketor} below, 
there is a subgroup $\overline{L}$ of $\overline{N}$ with
$\overline{L} \unlhd \overline{G}$,
$|\overline{N}: \overline{L}\overline{BT}| <\infty$,
and $\overline{L} \cap \overline{BT}= \{ 1 \}$.
Then, where $L\unlhd G$ is the preimage of $\overline{L}$,
we deduce that $|N:BL|<\infty$ (and so also $|H:AL|<\infty$)  
and $L \cap BT=T$. Since $T \cap B = 1$,
we have $L \cap B = 1$, and also $L \cap A=1$ (since if $g \in L \cap A$, we
have $g^k \in L \cap B = \{ 1 \}$, so $g=1$, since $A$ is torsion-free).
Now the natural map from $H$ to $H/A$, whose image is isomorphic to
$H_1 \times \cdots H_n$,
maps $AL$ to a group of finite index in $H/A$, which is isomorphic to $L$;
the image lifts to a subgroup $M$ of finite index in
$H_1 \times \cdots \times H_n$, to which we associate
an isomorphism $\phi:L \to M$.
For each $i$, we define $L_i := \phi^{-1}(M \cap H_i)$.
Then $L_i$ is isomorphic to a subgroup of finite index in $H_i$,
$L_1 \times \cdots \times L_n$ has finite index in $L$,
and $B \times L_1 \times \cdots \times L_n$ has finite index in $G$.

Let $h$ be a non-torsion element of $L_i$ for some $i$.
Then $C_L(h) \cong C_M(\phi(h))$
has finite index in the direct product of a virtually cyclic group and
$n-1$ non-elementary hyperbolic groups. Now, for any $g \in G$, the
same applies to $h^g = g^{-1}hg$ and so,
by Lemmas~\ref{lem:conjhyp} and \ref{lem:numhypfacs} 
applied to $\phi(h^g)$, we see that the projection of $\phi(h^g)$ onto
$H_j$ is a non-torsion element for exactly one value of $j$.

Furthermore, if $h'$ is another non-torsion element of the same $L_i$, then
$C_L(\langle h,h' \rangle)$ has finite index in the direct product of a
(possibly finite) virtually cyclic group and $n-1$ non-elementary
hyperbolic groups, and again the same applies to
$C_L(\langle h^g,h'^g \rangle)$. 
It follows that the unique subgroup $H_j$ onto which the projection of
$\phi(h^g)$ is a non-torsion element is the same as that onto which
the projection of $\phi(h'^g)$ is a non-torsion element;
hence we may denote that subgroup by $H_{i^g}$.
We notice too that we can find an integer $r$ for which the projections
of $\phi((h^g)^r)=\phi((h^r)^g)$ onto all subgroups $H_j$ apart from $H_{i^g}$
are trivial. So we have $h^r \in L_i$, and $(h^r)^g \in L_{i^g}$.
Of course, for all $g'$, we also have 
$\phi(((h^r)^{gg'})) = \phi(((h^r)^g)^{g'})$.
It follows that the unique subgroup $H_r$ onto which the projection
of  $\phi(h^r)^{gg'}$ is a non-torsion element is identified both as
$H_{i^{gg'}}$ and as $H_{(i^g)^{g'}}$, and so we see that, for all $g,g'$
we have $i^{gg'}=(i^g)^{g'}$.

Now, for any element $h \in L_i$, whether or not it is a torsion element,
and for any $j \ne i^g$, the projection of $\phi(h^g)$ onto $H_j$ is a torsion
element. So the projection of $\phi(L_i^g)$ onto $H_j$ is a torsion group
and hence, by Lemma \ref{lem:hypfacts} is finite.
Now let $P_i$ denote the intersection of the kernels of all homomorphisms 
$\theta: L_i \rightarrow H_i$ for which $|\theta(L_i)| < \infty$.
If $h \in P_i$, then $\phi(h^g) \in H_{i^g}$ and hence $h^g \in L_{i^g}$.
Also, since by Lemma \ref{lem:hypfacts} there is a bound on the orders of finite
subgroups of $H_i$, $|L_i:P_i|$ is finite.

Finally, let $K_i = \{ h \in L_i \mid h^g \in L_{i^g}\,\forall g \in G\}$.
Then it is straightforward to check that $K_i$ is a subgroup of $G$ and,
since $P_i \le K_i$,  we see that $|L_i:K_i|$ is finite; hence $K_i$ is
isomorphic to a subgroup of finite index in $H_i$.  It follows from the
statement $i^{gg'}=(i^g)^{g'}$ that $K_i^g \le K_{i^g}$ for all $i$ and $g$
and then, since $(K_{i^g})^{g^{-1}} \le K_i$, we must have $K_i^g = K_{i^g}$.
So we have proved that $\{K_1,\ldots,K_n\}$ is a union of orbits under the
conjugation action of $G$.

This completes the proof.
\end{proof}

\begin{lemma}\label{lem:bymaschketor}
Let $G$ be a finite group, let $V$ be a finite dimensional
torsion-free $\Z G$-module, and $W$ a submodule. Then there exists
a $\Z G$-submodule $U$ of $V$
with $U \cap W = \{0\}$ such that $V/(U \oplus W)$ is finite.
\end{lemma}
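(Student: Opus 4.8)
The plan is to reduce to the classical Maschke's theorem over $\Q$ by tensoring with $\Q$, and then pull a rational $G$-complement back to an integral submodule of full rank. Here ``finite dimensional'' means that $V$ has finite $\Z$-rank, so that (being torsion-free) $V \cong \Z^d$ as an abelian group.

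First I would set $V_\Q := V \otimes_\Z \Q$. Since $V$ is torsion-free of finite rank, the natural map $V \to V_\Q$ is injective, and I regard $V$ as a full-rank $\Z G$-sublattice of the finite-dimensional $\Q G$-module $V_\Q$; similarly $W$ embeds in $W_\Q$, which I identify with the $\Q$-span of $W$ inside $V_\Q$, a $\Q G$-submodule. Since $G$ is finite, $|G|$ is invertible in $\Q$, so by Maschke's theorem $\Q G$ is semisimple and there is a $\Q G$-submodule $U_\Q$ of $V_\Q$ with $V_\Q = U_\Q \oplus W_\Q$. Now define $U := U_\Q \cap V$. As $U_\Q$ is a $\Q$-subspace stable under $G$ and $V$ is a $\Z$-submodule stable under $G$, their intersection $U$ is a $\Z G$-submodule of $V$, and $U \cap W \subseteq U_\Q \cap W_\Q = \{0\}$.

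It then remains to show $V/(U \oplus W)$ is finite, equivalently that $U \oplus W$ has full rank $d$ in $V$ (the sum is direct inside $V$ since $U \cap W = \{0\}$). The one point needing a short argument is that $U$ spans $U_\Q$ over $\Q$: given $u \in U_\Q \subseteq V_\Q$, expressing $u$ as a $\Q$-linear combination of elements of $V$ and clearing denominators gives $mu \in V$ for some positive integer $m$, hence $mu \in U_\Q \cap V = U$; so $\dim_\Q(U \otimes \Q) = \dim_\Q U_\Q$. Therefore $(U \oplus W)\otimes_\Z \Q$ surjects onto $U_\Q \oplus W_\Q = V_\Q$, so $U \oplus W$ is a sublattice of $V \cong \Z^d$ of full rank $d$, hence of finite index. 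This yields the claim.

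I do not expect a serious obstacle; the only subtlety is the ``clearing denominators'' step guaranteeing that $U = U_\Q \cap V$ actually has full rank in $U_\Q$ --- skipping it would give $U \cap W = \{0\}$ but not the finiteness of $V/(U\oplus W)$. (Alternatively, one could build $U_\Q$ explicitly as the kernel of the $\Q G$-equivariant projection onto $W_\Q$ obtained by averaging an arbitrary $\Q$-linear projection over $G$, but this adds nothing essential.)
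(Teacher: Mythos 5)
Your proof is correct and follows essentially the same route as the paper's: reduce to Maschke's theorem over $\Q$ by tensoring with $\Q$, then pull the rational complement $U_\Q$ back to a full-rank $\Z G$-submodule of $V$ so that $U \oplus W$ has finite index. The only (harmless) difference is that you take $U = U_\Q \cap V$ and verify fullness of rank by clearing denominators, whereas the paper scales a chosen basis of $U_\Q$ by a common denominator; both implementations are fine, and you correctly flag the rank step that makes the finiteness of $V/(U\oplus W)$ work.
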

\begin{proof}
Let $\widehat{V}=V \otimes \Q$ and $\widehat{W}=W \otimes \Q$ be the
corresponding $\Q G$-modules. By Maschke's therem, there exists a
$\Q G$-submodule $\widehat{U}$ of $\widehat{V}$ with
$\widehat{V}=\widehat{U} \oplus \widehat{W}$.  Let $e_1,\ldots,e_n$ be a
$\Z$-basis of $V$, which we may consider also as a $\Q$-basis of
$\widehat{V}$. We can choose a basis $u_1,\ldots,u_k$ of $\widehat{U}$ such that
the matrices representing the action of $G$ have integer entries.
Define $\lambda_{ij} \in \Q$ by $u_i=\sum_{j=1}^n \lambda_{ij} e_j$.
Let $m$ be a common multiple of the denominators of all the $\lambda_{ij}$,
and define $U \subseteq V$ to be the $\Z$-module generated by the elements
$\mu_i,\ldots,\mu_k$ of $\widehat{U}$. Then $U \oplus W$ has rank $n$, and so
must have finite index in $V$.
\end{proof}

%\margin_comment{SR: added a few $\FIN$s to the proof}
\begin{proofof}{Theorem~\ref{thm:decide_abhyps}}
Let $G$ be as in the hypothesis of the theorem. Then, by
Proposition \ref{prop:abhyps_satisfy_plus}, $G$ has a normal subgroup
$K$ of finite index, such that $K \cong B \times K_1 \times \cdots K_n$, where
$B \le A$ with $|A:B|$ finite,
each $K_i$ is isomorphic to a finite index subgroup of $H_i$, and the set
$\{B,K_1,\ldots,K_n\}$ is a union of orbits under conjugation by $G$.
Now all groups in $\FIN(B)$ are virtually abelian, and all groups
in $\FIN(K_i)$ are hyperbolic for each $i$, so systems of equations and
inequations with recognisable constraints are decidable in all
groups in either $\FIN(B)$ or $\FIN(K_i)$ (any $i$).
The result now follows
by Theorem \ref{thm:decide_vdirprod_plus}.
\end{proofof}

\textsc{Laura Ciobanu,
Mathematical and Computer Sciences,
Colin McLaurin Building, 
Heriot-Watt University,      
Edinburgh EH14 4AS, UK}

\emph{E-mail address}{:\;\;}\texttt{l.ciobanu@hw.ac.uk}
\bigskip

\textsc{Derek Holt,
Mathematics Institute,
Zeeman Building,
University of Warwick,
Coventry CV4 7AL, UK
}

\emph{E-mail address}{:\;\;}\texttt{D.F.Holt@warwick.ac.uk}
\bigskip

\textsc{Sarah Rees,
School of Mathematics and Statistics,
University of Newcastle,
Newcastle NE1 7RU, UK
}

\emph{E-mail address}{:\;\;}\texttt{Sarah.Rees@ncl.ac.uk}
\end{document}